\theoremstyle{plain}
  \newtheorem{theorem}{Theorem}[section]
  \newtheorem{lemma}{Lemma}[section]
\theoremstyle{remark} 
\theoremstyle{definition}
\numberwithin{equation}{section}
\renewcommand{\det}{\mbox{det}}
  \newcommand{\dist}{\mbox{dist}}
\begin{document}

\title[Noncompact $L_p$-Minkowski problems]{Noncompact $L_p$-Minkowski problems}

\author{Yong Huang}
\address{Institute of Mathematics, Hunan University, Changsha, 410082, CHINA}
\email{huangyong@hnu.edu.cn}

\author{Jiakun Liu}
\address
	{Institute for Mathematics and its Applications, School of Mathematics and Applied Statistics,
	University of Wollongong,
	Wollongong, NSW 2522, AUSTRALIA}
\email{jiakunl@uow.edu.au}

\thanks{Research of Huang was supported by NSF Grant 11001261, Research of Liu was supported by the Australian Research Council DP170100929}

\subjclass[2010]{Primary 35J96, 35B53; Secondary 53A05.}

\date{\today}


\keywords{$L_p$-Minkowski problem, Monge-Amp\`ere equation}

\begin{abstract}
In this paper we prove the existence of complete, noncompact convex hypersurfaces whose $p$-curvature function is prescribed on a domain in the unit sphere. 
This problem is related to the solvability of Monge-Amp\`ere type equations subject to certain boundary conditions depending on the value of $p$.
The special case of $p=1$ was previously studied by Pogorelov \cite{P2} and Chou-Wang \cite{CW95}.
Here, we give some sufficient conditions for the solvability for general $p\neq1$.
\end{abstract}

\maketitle

\baselineskip=16.4pt
\parskip=3pt

\section{Introduction}

Let $M$ be a compact, strictly convex $C^2$-hypersurface in $\mathbb{R}^{n+1}$. 
Since the Gauss map is a bijection between $M$ and the unit sphere $\mathbb{S}^n$, $M$ can be parametrised by the inverse of the Gauss map, and consequently the Gauss curvature $K$ of $M$ can be regarded as a function on $\mathbb{S}^n$.
Let $H$ be the support function of $M$ (see definitions in \S2).
For $p\in\mathbb{R}$, $K_p:=KH^{p-1}$ is called the $p$-curvature of $M$.
The $L_p$-Minkowski problem introduced by Lutwak \cite{L2} asks that whether a given function on $\mathbb{S}^n$ is the $p$-curvature of a unique compact convex hypersurface.
This problem is related to the solvability of the following Monge-Amp\`ere type equation
	\begin{equation}\label{eq1.1}
		\det\,(\nabla_{ij} H + H \delta_{ij}) = f H^{p-1} \qquad \mbox{on }\ \mathbb{S}^n,
	\end{equation}
where $\nabla$ is the covariant differentiation with respect to an orthonormal frame on $\mathbb{S}^n$.
When $p=1$, one has the classical Minkowski problem \cite{CY,Po}.
For general $p$, the $L_p$-Minkowski problem has been intensively studied in recent decades, for example, in  \cite{BLYZ,CW, HL, Lu18, LW, L2,L3,LYZt, Zh, Z, Zh1} and many others.
We refer the reader to the newly expanded book \cite{S} by Schneider for a comprehensive introduction on related topics.

The same problem makes perfectly sense for complete, noncompact, convex hypersurfaces. 
In that case, by a suitable rotation, the spherical image of such a hypersurface is an open convex subset contained in the hemisphere $\mathbb{S}^n_-:=\{X\in\mathbb{S}^n : X_{n+1}<0\}$.
The corresponding problem is then: Given an open convex subset $D$ of $\mathbb{S}^n_-$ and a positive function $K_p$ in $D$, does there exist a (unique) complete convex hypersurface with spherical image $D$ and $p$-curvature $K_p$?

When $p=1$, Pogorelov \cite{P2} firstly proved the existence of such a hypersurface under certain decay conditions on $K$ near the boundary of $D$.  Chou and Wang \cite{CW95} considered it in more general cases.
For $p\neq1$, this problem becomes much more complicated, partly because the $p$-curvature $K_p$ involves the support function $H$, which depends on the position of hypersurface $M$ and thus is not translation-invariant. 
In this paper we give sufficient conditions for the solvability for general $p\neq1$, and extend Chou-Wang's results in \cite{CW95} for $p=1$.

Similarly as above, the problem in noncompact setting is related to the solvability of Equation \eqref{eq1.1} in $D$ associated with certain compatible boundary conditions, where $f=K_p^{-1}$ is prescribed. 
One can see clearly from Equation \eqref{eq1.1} that whether $p>1$ or $p<1$ makes a big difference, as the right hand side of equation goes to degenerate or singular when $H\to 0$, respectively. 
Correspondingly, in the subsequent context we shall consider these two cases separately. 

\textbf{When $p<1$}, from a geometric observation we show that if $f\geq0$, there does not exist such a complete, noncompact, convex hypersurface (with $H\geq0$) satisfying Equation \eqref{eq1.1} (see Lemma \ref{nonexistence}).
Instead, we consider $H < 0$, namely the origin lies in the concave side of $M$, and the hypersurface $M$ satisfies 
	\begin{equation}\label{re p<1}
		H=-\hat fK^{\frac{1}{1-p}} \qquad (p<1)
	\end{equation}
for a given positive function $\hat f$ in $D$.
We remark that when $p<1$, it is necessary to have $D$ strictly contained in $\mathbb{S}^n_-$, see \S3.
Our first result is

\begin{theorem}\label{mt1.1}
Let $D$ be a uniformly convex $C^2$-domain strictly contained in $\mathbb{S}^n_-$, $\hat f$ a positive function in $C^\alpha(D)\cap L^{1-p}(D)$, where $\alpha\in(0,1)$ and $p<1$. 
Suppose there exists two positive functions $g$ and $h$ defined in $(0,r_0]$, $r_0>0$, satisfying
\begin{itemize}
\item[(a)] $\int_0^{r_0}\left( \int_s^{r_0} g^{1-p}(t) dt \right)^{1/n} ds < \infty$,
\item[(b)] $\int_0^{r_0} h^{1-p}(t)dt = \infty$,
\end{itemize}
so that $C^{-1} h(\dist(X,\partial D)) \leq \hat f(X)\leq C g(\dist(X,\partial D))$ near $\partial D$ for some constant $C>0$.
Then there exists a unique complete, noncompact, strictly convex hypersurface $M$ such that the support function $H\in C^{2,\alpha}(D)\cap C(\overline D)$ satisfies \eqref{re p<1} in $D$, and $H=0$ on $\partial D$.
\end{theorem}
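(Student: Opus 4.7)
The plan is an exhaustion argument: solve a sequence of nondegenerate Dirichlet problems on smooth subdomains $D_k\uparrow D$, derive uniform estimates, and pass to the limit. Using $\det(\nabla^2 H+HI)=1/K$, the prescription \eqref{re p<1} with $H<0$ is equivalent to the Monge--Amp\`ere equation
\[
\det\bigl(\nabla^2 H + H\,I\bigr)=\hat f^{\,1-p}\,(-H)^{p-1} \quad \text{in } D,
\]
with the degenerate Dirichlet condition $H=0$ on $\partial D$.

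Choose uniformly convex $C^2$-subdomains $D_k$ with $\overline{D_k}\subset D_{k+1}$ and $\bigcup_k D_k=D$, smooth strictly positive approximations $\hat f_k\to\hat f$ on $\overline{D_k}$, and $\epsilon_k\downarrow 0$. For each $k$, solve
\[
\det\bigl(\nabla^2 H_k + H_k\,I\bigr)=\hat f_k^{\,1-p}\,(-H_k)^{p-1} \text{ in } D_k, \quad H_k=-\epsilon_k \text{ on } \partial D_k,
\]
which corresponds to a compact strictly convex body; its solvability in the smooth strictly convex class follows from the compact $L_p$-Minkowski theory and a continuity argument, since $\hat f_k$ is bounded above and below by positive constants on $\overline{D_k}$.

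The heart of the proof is to produce uniform $k$-independent estimates on $H_k$. A global bound $|H_k|\le U$ would follow from comparison against a radial super-solution built from the upper envelope $g(\dist(X,\partial D))$: condition (a) is designed exactly for this, with the inner integral of $g^{1-p}$ controlling a gradient quantity and the outer $1/n$-power integral (reflecting the $n$-th root of the Monge--Amp\`ere determinant) producing a finite $C^0$-majorant. Uniform vanishing $H_k(X)\to 0$ as $\dist(X,\partial D)\to 0$ would be proved using a barrier tied to the lower envelope $h$: if $|H_k|$ had a positive lower bound near a boundary point, integrating the equation along a curve reaching $\partial D$ would force $\int h^{1-p}<\infty$, contradicting (b). Once uniform $C^0$-bounds hold and $|H_k|$ is uniformly bounded below away from $0$ on compact subsets of $D$, the equation is uniformly elliptic and nondegenerate there, so Caffarelli/Pogorelov interior estimates together with Evans--Krylov give locally uniform $C^{2,\alpha}$ bounds from the $C^\alpha$ regularity of $\hat f$. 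A diagonal extraction then yields the desired limit $H\in C^{2,\alpha}_{\text{loc}}(D)\cap C(\overline D)$ with $H=0$ on $\partial D$, and $H$ is the support function of a complete noncompact strictly convex hypersurface $M$, noncompactness being secured by (b). Uniqueness follows by a maximum-principle comparison, since $(-H)^{p-1}$ is strictly decreasing in $-H$ for $p<1$, giving the correct sign for the linearized operator applied to the difference of two solutions.

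The main technical obstacle is the barrier construction: unlike the $p=1$ case treated by Pogorelov and Chou--Wang, the right-hand side now depends nonlinearly on $H$ through the factor $(-H)^{p-1}$, so the barriers must absorb this coupling while extracting precisely the integral conditions (a) and (b). The auxiliary hypothesis $\hat f\in L^{1-p}(D)$ is expected to enter as a global integrability bound preventing degeneracy of the limit (for instance, to control $\int_D (-H)^{1-p}$ and ensure the limit is nontrivial with the correct asymptotic behavior).
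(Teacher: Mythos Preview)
Your outline has the right shape but skips the two steps that carry all the content, and one of your justifications is incorrect.

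First, the solvability of your approximate problems is not a consequence of the ``compact $L_p$-Minkowski theory.'' That theory concerns closed hypersurfaces with support functions defined on all of $\mathbb{S}^n$; your problem on $D_k$ with Dirichlet data $H_k=-\epsilon_k$ is a boundary value problem on a spherical cap and does not correspond to a compact convex body. To solve it you need a Dirichlet theory for Monge--Amp\`ere equations, and the natural way to access that is precisely what the paper does: extend $H$ homogeneously of degree one and set $u(x)=H(x,-1)$ on the bounded domain $\Omega\subset\{x_{n+1}=-1\}$, which converts the spherical equation into the standard form
\[
\det D^2u = \mu^{-(n+p+1)}\hat f^{1-p}\Bigl(\frac{-1}{u}\Bigr)^{1-p}\quad\text{in }\Omega,\qquad u=0\text{ on }\partial\Omega,\qquad |Du|\to\infty\text{ on }\partial\Omega.
\]
This removes the $HI$ term and puts you in the setting where barriers and Perron arguments are available.

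Second, the barriers are not a detail to be filled in later; they are the proof. The paper handles the coupling with $(-u)^{p-1}$ by taking, near $\partial\Omega$,
\[
v(x)=-(-\rho(d))^{\varepsilon},\qquad \rho(d)=-\int_0^d\Bigl(\int_s^{r_0}g^{1-p}(t)\,dt\Bigr)^{1/n}ds,\qquad \varepsilon=\frac{n}{\,n+1-p\,},
\]
so that the factor $(-\rho)^{n(\varepsilon-1)}$ produced by the Hessian determinant exactly matches $(-1/v)^{1-p}$; after rescaling this gives a subsolution with $v=0$ on $\partial\Omega$, and Perron's method (not exhaustion) yields existence directly. Completeness is obtained from a supersolution built from the analogous $\rho$ with $h$ in place of $g$, and condition (b) forces $|Du|\to\infty$. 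The role of $\hat f\in L^{1-p}(D)$ is isolated in a separate lemma: it forces $H\to 0$ on $\partial D$ by an area argument (if $-H\ge\delta$ near a boundary point, the corresponding piece of $M$ has infinite $\mathcal{H}^n$-measure, contradicting $\int_D\hat f^{1-p}<\infty$). Your proposal gestures at each of these ingredients but supplies none of them; in particular, without the specific exponent $\varepsilon=n/(n+1-p)$ it is not clear your barrier scheme closes.
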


We remark that the assumption $\hat f \in L^{1-p}(D)$ ensures $H=0$ on $\partial D$ and $M$ approaches to an asymptotic convex cone. Without this assumption, one can also obtain the existence of $M$ with a general boundary condition $H=\Phi$ for a function $\Phi\in C^2(\partial D)$, but to show $M$ is complete, one needs a stronger assumption that $h(\dist(X,\partial D)) / \hat f(X) \to 0$ as $X \to \partial D$. More details are contained in \S3.

\textbf{When $p>1$}, depending on the relative position of $D$ there are multiple cases for discussion.
By suitably rotating axes, we may assume $D$ satisfies one and exactly one of the following conditions:
\begin{itemize}
	\item[(I)]	$D$ is strictly contained in $\mathbb{S}^n_-$,
	\item[(II)]	$D=\mathbb{S}^n_-$,
	\item[(III)]	$D$ is a proper subset of $\mathbb{S}^n_-$ and it is not strictly contained in any hemisphere. 
\end{itemize}
We shall say $M$ is of \emph{type I, II, or III} when its spherical image $D$ satisfies (I), (II), or (III), respectively. 
Notice that by our choice of coordinates, $M$ is the graph of a convex function over a convex domain in the $(x_1,\cdots,x_n)$-space. 

For type \emph{I} hypersurfaces, it is clear that $M$ is complete if and only if $M$ is a graph over $\mathbb{R}^n$.
Correspondingly, we impose a boundary condition to the support function $H=\Phi$ on $\partial D$, where $\Phi$ is a prescribed function. 

\begin{theorem}\label{thm I}
Let $D$ be a uniformly convex $C^2$-domain satisfying condition (I), $p\geq1$ and $p \neq n+1$. 
Assume $K_p$ is a positive function in $C^\alpha(D)$ and $\Phi$ is a function in $C^{2}(\partial D)$.
Suppose there exists two positive functions $g$ and $h$ defined in $(0,r_0]$, $r_0>0$, satisfying
\begin{itemize}
\item[(a)] $\int_0^{r_0}\left(\int_s^{r_0}g(t)dt\right)^{1/n}ds<\infty$,
\item[(b)] $\int_0^{r_0}h(t)dt=\infty$, 
\end{itemize}
so that $K_p(X)g(\dist(X,\partial D)) \geq C^{-1}$ and $K_p(X)h(\dist(X,\partial D)) \leq C$ near $\partial D$ for some constant $C>0$. 
Then there exists a unique complete, noncompact, strictly convex hypersurface $M$ such that $K_p$ is the $p$-curvature of $M$ and $H=\Phi$ on $\partial D$. 
\end{theorem}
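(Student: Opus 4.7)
The plan is to mirror the approach indicated earlier for $p<1$: solve a sequence of Dirichlet problems on an exhausting family of subdomains with regularized data, extract uniform a priori estimates, and pass to the limit using barriers built from $g$ and $h$. The qualitative difference from Chou--Wang's $p=1$ case is the nonlinear factor $H^{p-1}$ on the right-hand side of \eqref{eq1.1}: the Monge-Amp\`ere operator of a candidate barrier must balance $H^{p-1}$ rather than a constant, and $H$ must be kept pointwise positive in the interior throughout the approximation.

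\textbf{Approximation and regularized Dirichlet problems.} I first fix uniformly convex $C^2$ subdomains $D_j\Subset D_{j+1}\Subset D$ exhausting $D$, extend $\Phi$ to a function in $C^2(\overline D)$ bounded below by a positive constant, and regularize $K_p$ to a sequence $K_p^{(j)}\in C^\alpha(\overline{D_j})$ that is uniformly positive on $\overline{D_j}$ and converges to $K_p$ in $C^\alpha_{\mathrm{loc}}(D)$. On each $D_j$ I then solve
\begin{equation*}
\det(\nabla_{ij}H_j+H_j\delta_{ij}) = \bigl(K_p^{(j)}\bigr)^{-1} H_j^{p-1}\ \text{in } D_j,\qquad H_j=\Phi\ \text{on }\partial D_j,
\end{equation*}
with $H_j$ strictly convex and positive. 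For $p\geq 1$ and $p\neq n+1$, this is a classical Dirichlet problem for a Monge-Amp\`ere type equation with smooth, bounded, positive right-hand side; existence follows from the continuity method together with the standard $C^0,C^1,C^2$ a priori estimates, and the exclusion of the critical exponent $p=n+1$ prevents the scale-invariance that would otherwise defeat those estimates.

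\textbf{Barriers, limit, and uniqueness.} The heart of the proof is a uniform boundary control on $\{H_j\}$. In a one-sided neighborhood of $\partial D$ I build a supersolution via
\begin{equation*}
\bar H(X) := \Phi(X) + A\int_0^{d(X)}\left(\int_\tau^{r_0} g(t)\,dt\right)^{1/n} d\tau,\qquad d(X) := \dist(X,\partial D).
\end{equation*}
Condition (a) makes $\bar H$ bounded, and the uniform convexity of $\partial D$ provides enough control on $\nabla^2 d$ to verify $\det(\nabla_{ij}\bar H+\bar H\delta_{ij})\leq \bigl(K_p^{(j)}\bigr)^{-1}\bar H^{p-1}$ near $\partial D$ once $A$ is large, after which the comparison principle gives $H_j\leq \bar H$ uniformly in $j$. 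A mirror construction based on $h$ and condition (b) furnishes a subsolution that pins $H_j$ to $\Phi$ on $\partial D$ in the limit. On compact subsets of $D$, the positivity of $H_j$ (the origin lies on the convex side of the associated hypersurface $M_j$), combined with Pogorelov-type second derivative estimates and Evans--Krylov--Schauder theory, yields uniform $C^{2,\alpha}_{\mathrm{loc}}(D)$ bounds. A subsequential limit $H\in C^{2,\alpha}(D)\cap C(\overline D)$ then solves \eqref{eq1.1} with $H=\Phi$ on $\partial D$; since $D$ is strictly contained in $\mathbb{S}^n_-$, the boundary $\partial D$ corresponds to the asymptotic directions of the resulting hypersurface $M$, making $M$ a complete graph over $\R^n$. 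Uniqueness for $p>n+1$ follows from comparison after substituting $u=\log H$, which rewrites the equation as $\det(\nabla^2 u+\nabla u\otimes\nabla u+I)=f\,e^{(p-n-1)u}$ with strictly monotone right-hand side; for $1\leq p<n+1$ a finer argument is required, adapted from the $L_p$-Brunn--Minkowski uniqueness proof in the compact setting.

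\textbf{Main obstacle.} The delicate step is the barrier construction above. Condition (a) must be sharp enough for the explicit ansatz to produce a genuine supersolution of \eqref{eq1.1}: since $\bar H$ is bounded above and below, the factor $\bar H^{p-1}$ contributes only multiplicative constants, but the determinant expansion in coordinates adapted to $\partial D$ produces competing contributions from $\phi''(d)$, which can be large negative, against the tangential eigenvalues of $\nabla^2\bar H+\bar H I$, and verifying that these balance correctly under condition (a) is the most technical part. A secondary difficulty is verifying completeness of the limit hypersurface: one must rule out that $M$ terminates at a finite cap, which requires that $\Phi$ is finite on all of $\partial D$ and that the asymptotic cone of $M$ has spherical image exactly $\partial D$.
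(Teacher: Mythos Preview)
Your proposal has the barrier roles reversed and misses the mechanism by which $p\neq n+1$ actually enters. The paper does not exhaust $D$ by subdomains; instead it projects $D$ to a bounded Euclidean domain $\Omega$ via the homogeneous extension of $H$, reducing \eqref{eq1.1} to $\det D^2u=R(x)\,u^{p-1}$ on $\Omega$, and then runs Perron's method directly on $\Omega$. For Perron one needs a \emph{subsolution}, and this is where $g$ and condition (a) are used: the paper sets $\rho(d)=-\int_0^d\bigl(\int_s^{r_0}g\bigr)^{1/n}ds$ (note the minus sign, which makes $\rho$ convex) and takes $w=\phi+A\rho$. The restriction $p\neq n+1$ appears precisely here, and the two ranges require \emph{different} constructions: for $1\le p<n+1$ the linear ansatz works because $\det D^2w\ge C A^n g$ beats $w^{p-1}R\le C'A^{p-1}g$ for $A$ large, while for $p>n+1$ this power count fails and a separate ansatz $v=(-A\rho+c)^{-\delta}$ with $\delta=n/(p-n-1)$ is needed. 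Your $\bar H=\Phi+A\int_0^{d}\bigl(\int_\tau^{r_0}g\bigr)^{1/n}d\tau$ has the opposite sign of the paper's $\rho$, so it is \emph{concave} in the normal direction for large $A$; it is not an admissible barrier for the Monge--Amp\`ere operator, and the comparison $H_j\le\bar H$ you claim does not follow. You also locate the role of $p\neq n+1$ in the approximate Dirichlet problems on $D_j$, but those problems are solvable for any $p\ge1$ with bounded positive data; the exclusion is needed only in the global barrier construction.

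Likewise, condition (b) on $h$ is not used to ``pin $H_j$ to $\Phi$'' via a subsolution; it is used to prove \emph{completeness}. The paper builds, near each boundary point, a convex \emph{supersolution} $\hat v=\eta\tilde v+\ell$, where $\tilde v$ is constructed from $\rho$ with $g$ replaced by $h$ and $\ell$ is affine; condition (b) forces $|D\hat v|\to\infty$ at $\partial\Omega$, and comparison then gives $|Du|\to\infty$, i.e., $Du(\Omega)=\mathbb{R}^n$, which is exactly the statement that $M$ is an entire graph. Your completeness paragraph (``$\partial D$ corresponds to the asymptotic directions'') assumes this conclusion rather than proving it. Finally, uniqueness here is just the standard comparison principle for $\det D^2u=R(x)u^{p-1}$ with $p\ge1$ and fixed Dirichlet data on a bounded domain; no logarithmic substitution or $L_p$-Brunn--Minkowski argument is required.
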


In fact, for this reconstruction of complete convex hypersurface, Aleksandrov \cite{Al42} firstly formulated the geometric problem with prescribed area of Gaussian mapping and its asymptotic cone.
It amounts to the solvability of the boundary value problem for a Monge-Amp\`ere equation, see \cite{Al42, AlB}. 
Bakelman \cite{Ba58, Ba} established the existence and uniqueness of convex generalised solutions for the second boundary value problem of the Monge-Amp\`ere equation
\begin{equation}
\det\,D^2u =\frac{T(x)}{Q(Du)}.
\end{equation}
It has been convinced that the necessary and sufficient condition for the existence of those complete hypersurfaces project one-to-one on $\mathbb{R}^n$ with prescribed asymptotic cone $\mathcal{C}$, is
$$
\int_{\mathbb{R}^n} T(x)dx=\int_{\aleph_{\mathcal{C}}(\mathbb{R}^n)}Q(p)dp,
$$
where $\aleph_{\mathcal{C}}(\mathbb{R}^n)$ is the normal image of $\mathcal{C}$. See also Pogorelov \cite{PB, P2}. 
Moreover, Oliker \cite{O} constructed a minimisation problem associated with the Monge-Kantorovich optimal mass transfer problem for this kind of reconstruction geometric problem.

For type \emph{II} hypersurfaces, we prove the existence of such a complete hypersurface $M$ when $p>n+1$, under an asymptotic growth assumption on the prescribed function $K_p$.
\begin{theorem}\label{thm II}
Assume that $K_p$ satisfies an asymptotic growth condition
	\begin{equation}\label{Kp asym}
		K_p(X) \sim |X_{n+1}|^{2q},
	\end{equation}
for some constant $q\in(0,1)$. 
When $p>n+1$, there exists a complete noncompact convex hypersurface $M$ such that $K_p$ is the $p$-curvature of $M$. 
\end{theorem}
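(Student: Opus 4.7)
\emph{Plan.} My plan is to exhaust $\mathbb{S}^n_-$ from the inside by a sequence of uniformly convex smooth subdomains $D_m\subset\subset\mathbb{S}^n_-$, for instance $D_m:=\{X\in\mathbb{S}^n : X_{n+1}<-1/m\}$, solve a Dirichlet problem for \eqref{eq1.1} on each $D_m$ using Theorem \ref{thm I}, and pass to the limit $m\to\infty$. On each $\overline{D_m}$ the datum $K_p$ is strictly positive and bounded, so the hypotheses of Theorem \ref{thm I} are trivially satisfied (e.g.\ with $g\equiv 1$ and $h(t)=1/t$), and for a carefully chosen boundary datum $\Phi_m\in C^2(\partial D_m)$ the theorem yields a unique strictly convex solution $H_m\in C^{2,\alpha}(D_m)\cap C(\overline{D_m})$ of \eqref{eq1.1} with $H_m=\Phi_m$ on $\partial D_m$. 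The heart of the proof is to choose $\Phi_m$ appropriately and to secure uniform interior estimates that survive the limit.

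\emph{Barriers and the role of $p>n+1$.} A formal scaling analysis of \eqref{eq1.1} near the equator with $f(X)=K_p(X)^{-1}\sim|X_{n+1}|^{-2q}$ and the ansatz $H\sim(-X_{n+1})^{-\sigma}$ forces
\[
\sigma=\frac{2n-2q}{p-n-1},
\]
which is strictly positive precisely when $p>n+1$ (using $q\in(0,1)$); the paraboloid $y=\tfrac12|x|^2$ realises exactly this asymptotic with $\sigma=1$. I would construct an explicit radial supersolution $\overline H(X)=A(-X_{n+1})^{-\sigma}$ (possibly with a lower order correction) satisfying $\det(\nabla_{ij}\overline H+\overline H\delta_{ij})\leq f\,\overline H^{p-1}$ on $\mathbb{S}^n_-$ for $A$ sufficiently large, together with a strictly positive subsolution $\underline H$ bounded on compact subsets of $\mathbb{S}^n_-$ (the support function of any fixed small convex body contained in the expected limit is a natural candidate). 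Choosing $\Phi_m$ so that $\underline H\leq\Phi_m\leq\overline H$ on $\partial D_m$, a comparison argument for \eqref{eq1.1} then pinches
\[
\underline H \leq H_m \leq \overline H \quad\text{on } D_m
\]
uniformly in $m$, yielding two-sided $C^0$ bounds on any $K\subset\subset\mathbb{S}^n_-$ independent of $m$.

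\emph{Compactness and main obstacle.} On every $K\subset\subset\mathbb{S}^n_-$ the equation \eqref{eq1.1} is then uniformly non-degenerate Monge--Amp\`ere, so the interior $C^{2,\alpha}$ theory combined with Schauder bootstrapping delivers uniform $C^{2,\alpha}_{\mathrm{loc}}$ bounds on $H_m$. A diagonal extraction along an exhaustion produces a subsequential limit $H\in C^{2,\alpha}_{\mathrm{loc}}(\mathbb{S}^n_-)$ solving \eqref{eq1.1}, and the sandwich $\underline H\leq H\leq\overline H$, with $\overline H(X)\to+\infty$ as $X_{n+1}\to 0$, forces the associated convex hypersurface $M$ to have spherical image exactly $\mathbb{S}^n_-$ and to be complete and noncompact, which is the desired $M$. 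The main obstacle is the barrier step: one has to verify that the explicit radial profile $(-X_{n+1})^{-\sigma}$ is genuinely a supersolution of \eqref{eq1.1} on the whole of $\mathbb{S}^n_-$, not merely to leading order near the equator. This demands computing the covariant Hessian on $\mathbb{S}^n$, diagonalising the tensor $\nabla_{ij}\overline H+\overline H\delta_{ij}$, and absorbing the lower order terms by choosing $A$ large; it is precisely the assumption $p>n+1$ that makes $\sigma>0$ and keeps these errors subordinate.
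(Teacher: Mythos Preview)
Your overall strategy---explicit power-law barriers, an exhausting sequence of Dirichlet problems, and a diagonal limit via interior Monge--Amp\`ere regularity---is exactly what the paper does. But there is a genuine gap in the step where you invoke Theorem~\ref{thm I} on $D_m$. Condition~(b) there demands a positive $h$ with $\int_0^{r_0}h=\infty$ and $K_p(X)\,h(\dist(X,\partial D_m))\leq C$ near $\partial D_m$; your choice $h(t)=1/t$ would force $K_p(X)\leq C\,\dist(X,\partial D_m)\to 0$, yet by construction $K_p$ is bounded below by $(1/m)^{2q}>0$ on $\overline{D_m}$. In fact no admissible $h$ exists: condition~(b) is precisely the mechanism that makes the gradient blow up at the boundary, and it is incompatible with $K_p$ bounded away from zero. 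So Theorem~\ref{thm I} cannot supply your approximating solutions. You only need plain Dirichlet solvability here (bounded, not complete, hypersurfaces), for which one should cite a standard result such as Caffarelli--Nirenberg--Spruck rather than Theorem~\ref{thm I}.

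The paper sidesteps both this issue and the covariant-Hessian computation you flag as the main obstacle by passing to the projective chart: with $u(x)=H(x,-1)$ on $\Omega=\mathbb{R}^n$, equation~\eqref{eq1.1} becomes the flat Monge--Amp\`ere equation $\det D^2u=(1+|x|^2)^{-\gamma}\,u^{p-1}$ with $\gamma=\tfrac{n+p+1}{2}-q$, and the barrier $w(x)=(1+|x|^2)^{\delta}$ with $\delta=(\gamma-n)/(p-n-1)$ is checked to be a sub/supersolution (after scaling) by a two-line Hessian computation. One then solves on balls $B_{2^k}$ via \cite{CNS1} and passes to the limit. The condition $q<1$ is what guarantees $\delta>\tfrac12$, hence $u(x)/\sqrt{1+|x|^2}\to\infty$ and completeness. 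Incidentally, translating back to the sphere gives $H\sim(-X_{n+1})^{-\sigma}$ with $\sigma=2\delta-1=\dfrac{2-2q}{p-n-1}$, not $\dfrac{2n-2q}{p-n-1}$ as you wrote; your paraboloid check only happens to agree because it corresponds to $n=1$ in your formula's error term.
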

We further remark that when $M$ is a graph over a bounded domain $\Omega^*$, \eqref{Kp asym} is necessary for the solvability, see \S4.2.

For type \emph{III} hypersurfaces, we have a similar result as Theorem \ref{thm I}, however, the boundary condition is imposed on part of $\partial D$ that is away from $\partial\mathbb{S}^n_-$. The corresponding statement is postponed to Theorem \ref{thm III} in \S4.3.

Last, we point out that the $L_p$-Minkowski problem is related to the expanding Gauss curvature flow when $p>1$, and the contracting Gauss curvature flow when $p<1$, as Equation \eqref{eq1.1} describes homothetic solutions in each case, respectively. 
For complete, noncompact hypersurfaces, one may consult Urbas \cite{U1,U2} for works in this direction, also \cite{CW95, HLX} and references therein. 

This paper is organised as follows: 
In \S2, we derive the Monge-Amp\`ere equation \eqref{eq1.1}. Although this has been done in some literatures, we would like to provide a more general and unified treatment, which includes \eqref{eq1.1} as a special case when the metric on $\mathbb{S}^n$ is orthonormal. 
In \S3, we consider the case of $p<1$ and prove Theorem \ref{mt1.1}. Moreover, when the prescribed function $\hat f$ satisfies certain boundedness conditions, we can give a different proof without the uniform convexity assumption on $D$. 
In \S4, we consider the case of $p>1$, Theorems \ref{thm I}, \ref{thm II} and \ref{thm III} are proved in \S4.1--\S4.3, respectively.

\vspace{10pt}

\section{Preliminaries}

\subsection{Support function}
Let $M$ be a strictly convex $C^2$-hypersurface in $\mathbb{R}^{n+1}$ and let $D\subset\mathbb{S}^n$ be its spherical image.
Assume that $M$ is parametrised by the inverse Gauss map $X : D \to M \subset \mathbb{R}^{n+1}$. 
The \emph{support function} of $M$ is defined by
	\begin{equation}\label{supp}
		H(\xi) = \langle \xi, X(\xi) \rangle,\quad \xi\in D,
	\end{equation}
where $\langle\cdot,\cdot\rangle$ is the inner product in $\mathbb{R}^{n+1}$. 

The metric and the second fundamental form of $M$ can be represented in terms of its support function $H$. 
To see that, let $(u_1,\cdots,u_n)$ be a local coordinate chart and $\{e_i=\partial_i\xi\}$ be the local frame field on $D$, where $\partial_i=\partial/\partial u_i$, $i=1,\cdots,n$.  
By differentiating \eqref{supp} we obtain that
	\begin{equation}\label{dif1}
		\partial_i H = \langle \partial_i\xi, X \rangle + \langle \xi, \partial_iX \rangle = \langle e_i, X \rangle,
	\end{equation}
since $\partial_i X(\xi)$ is tangential to $M$ at $X(\xi)$, and $\xi$ is the normal to $M$ at $X(\xi)$.
Differentiating once again, we have
	\begin{equation}\label{dif2}
		\partial_{ij} H = \langle \partial_je_i,X \rangle + \langle e_i, \partial_j X\rangle = \langle \partial_je_i,X \rangle + h_{ij},
	\end{equation}
where $h_{ij}$ is the second fundamental form of $M$. 

To compute $\langle \partial_je_i,X \rangle$, we use the Gauss derivation formulas to get
	\begin{equation}\label{GaEq}
		\partial_je_i = \Gamma_{ij}^k e_k - \sigma_{ij}\xi,
	\end{equation}
where $\Gamma_{ij}^k$ are the Christoffel symbols, and $\sigma_{ij}=\langle e_i, e_j\rangle$ is the metric on $D\subset\mathbb{S}^n$, respectively. 
Combining \eqref{dif1}, \eqref{dif2} and \eqref{GaEq}, we can obtain
	\begin{equation*}
	\begin{split}
		\partial_{ij} H &= \Gamma_{ij}^k\langle e_k, X \rangle - \sigma_{ij}\langle \xi, X \rangle + h_{ij} \\
			&= \Gamma_{ij}^k \partial_k H - H \sigma_{ij} + h_{ij},
	\end{split}
	\end{equation*}
and thus
	\begin{equation}\label{2ndf}
		h_{ij} = \partial_{ij} H - \Gamma_{ij}^k \partial_k H + H\sigma_{ij} =  \nabla_{ij} H + H\sigma_{ij}, 
	\end{equation}
where $\nabla_{ij} H $ is the covariant differentiation of $H$ with respect to the frame $\{e_i\}$ on $\mathbb{S}^n$. 

Let's now compute the metric $g_{ij}$ of $M$. Using the Gauss-Weingarten relations
	\begin{equation}\label{GWEq}
		\partial_i\xi = h_{ik}g^{kl}\partial_lX,
	\end{equation}
we have
	\begin{equation*}
		\sigma_{ij} = \langle \partial_i\xi, \partial_j\xi \rangle = h_{ik}g^{kl}h_{jm}g^{ms}\langle \partial_lX, \partial_sX \rangle = h_{ik}h_{jl}g^{kl}.
	\end{equation*}
Hence, the metric satisfies
	\begin{equation}\label{1stf}
		g_{ij} = h_{ik}h_{jl}\sigma^{kl}.
	\end{equation}

The principal radii of curvature are the eigenvalues of the matrix $b_{ij}=h^{ik}g_{jk}$, which, by virtue of \eqref{2ndf} and \eqref{1stf}, is given by
	\begin{equation}\label{eign}
		b_{ij} = h_{ik}\sigma^{kj} = (\nabla_{ik} H + H \sigma_{ik})\sigma^{kj}.
	\end{equation}
Therefore, the Gauss curvature $K$ of $M$ is given by
	\begin{equation}\label{Gaus}
		\frac{1}{K} = \det\,b_{ij} = \frac{\det\,(\nabla_{ij} H + H \sigma_{ij})}{\det\,(\sigma_{ij})}.
	\end{equation}

For a general $p\in\mathbb{R}$, the \emph{$p$-curvature} of $M$ is defined by $K_p:=KH^{p-1}$.
When $p=1$, $K_1=K$ is the Gauss curvature. When $p\neq1$, $K_p$ involves the support function $H$, and thus is not intrinsic. 
The $L_p$-Minkowski problem asks for the existence of $M$ with a prescribed $p$-curvature $K_p$. 
Let $f:=K_p^{-1}=H^{1-p}/K$, from \eqref{Gaus} one can obtain the Monge-Amp\`ere equation satisfied by $H$,
	\begin{equation}\label{PDEH}
		\det\,(\nabla_{ij} H + H \sigma_{ij}) = f H^{p-1} \det\,(\sigma_{ij})\quad\mbox{on }D.
	\end{equation}

In particular, under a smooth local orthonormal frame field on $\mathbb{S}^n$, namely $\sigma_{ij}=\delta_{ij}$, the above equation becomes \eqref{eq1.1}, namely
	\begin{equation}\label{pde2}
		\det\,(\nabla_{ij} H + H \delta_{ij}) = f H^{p-1}\quad\mbox{on }D.
	\end{equation}

\subsection{Homogeneous extension}
In order to study the solvability of $L_p$-Minkowski problems, it is convenient to express the above equations for $H$ in a local coordinate chart. 
Extend $H$ to be a function of homogeneous degree one over the cone $\{\lambda\xi : \xi\in D, \lambda>0\}$, let $\Omega := \{\lambda\xi : \xi\in D, \lambda>0\} \cap \{x_{n+1}=-1\}$, and $u(x) = H(x,-1)$, where $x=(x_1,\cdots,x_n)$. Denote
	\begin{equation}\label{mufa}
		\mu(x) = \left(1+\sum_{i=1}^nx_i^2\right)^{1/2}.
	\end{equation}
By the homogeneity, 
	\begin{equation}\label{Htou}
		u(x) = \mu(x) H(\xi(x)),\quad x\in\Omega,
	\end{equation}
where $\xi(x)\in D$ is given by
	\begin{equation}\label{qtop}
		\mu(x)\xi(x) = (x,-1).
	\end{equation}
	
In order to rewrite equation \eqref{PDEH} in terms of $u$, we adopt the following computations from \cite{LO}. 
Differentiating \eqref{qtop} we have
	\begin{equation}\label{dmu1}
		(\partial_i\mu)\xi + \mu \, \partial_i\xi = (\underbrace{0,\cdots,0}_{i-1},1,0,\cdots,0).
	\end{equation}
Differentiating once again yields
	\begin{equation}\label{dmu2}
		(\partial_{ij}\mu)\xi + (\partial_i\mu)\partial_j\xi + (\partial_j\mu)\partial_i\xi + \mu \, \partial_{ij}\xi = 0.
	\end{equation}
By the Gauss derivation formulas,
	\begin{equation}
		\partial_{ij}\xi = \Gamma_{ij}^k\partial_k\xi - \sigma_{ij}\xi,
	\end{equation}
where $\sigma_{ij}=\langle\partial_i\xi,\partial_j\xi\rangle$ is the metric of $\mathbb{S}^n$.
Taking the inner product of \eqref{dmu2} with $\partial_s\xi$, and noting that $\langle\partial_s\xi,\xi\rangle=0$, we get
	\begin{equation}\label{aux1}
		\partial_i\mu \, \sigma_{sj}+ \partial_j\mu \, \sigma_{si} + \mu \, \Gamma_{ij}^k \, \sigma_{ks} = 0.
	\end{equation}
While taking the inner product of \eqref{dmu2} with $\xi$, we also get
	\begin{equation}\label{aux2}
		\partial_{ij}\mu = \mu \, \sigma_{ij}.
	\end{equation}
	
Now, differentiating \eqref{Htou} we obtain
	\begin{equation*}
		\partial_i u = \partial_i\mu \, H + \mu \, \partial_iH,
	\end{equation*}
then by \eqref{aux1} and \eqref{aux2}
	\begin{equation}\label{2ndu}
	\begin{split}
		\partial_{ij} u &= \partial_{ij}\mu \, H + \partial_i\mu \, \partial_jH + \partial_j\mu \, \partial_iH + \mu \, \partial_{ij}H \\
			&= \partial_{ij}\mu \, H + (\partial_i\mu\,\sigma_{js} + \partial_j\mu\,\sigma_{is})\sigma^{sl}\partial_lH + \mu \, \partial_{ij}H \\
			&= \partial_{ij}\mu \, H - \mu\, \Gamma_{ij}^l \, \partial_lH + \mu \, \partial_{ij}H \\
			&= \partial_{ij}\mu \, H + \mu\, \nabla_{ij}H \\
			&= \mu \, (\nabla_{ij}H + H \sigma_{ij}).
	\end{split}
	\end{equation}
	
On the other hand, by straightforward computations we have
	\begin{equation}\label{cha1}
		\sigma_{ij} = (1+|x|^2)^{-1}\left(\delta_{ij}-\frac{x_ix_j}{1+|x|^2}\right),
	\end{equation}	
and
	\begin{equation}\label{cha2}
		\det\,(\sigma_{ij}) = (1+|x|^2)^{-(n+1)} = \mu^{-2(n+1)}.
	\end{equation}

Substituting \eqref{Htou}, \eqref{2ndu} and \eqref{cha2} into \eqref{PDEH}, we obtain the standard Monge-Amp\`ere equation satisfied by $u$,
	\begin{equation}\label{PDEu}
		\det\,D^2u = (1+|x|^2)^{-\frac{n+p+1}{2}} f\left(\frac{x,-1}{\sqrt{1+|x|^2}}\right)\,u^{p-1},\quad x\in\Omega,
	\end{equation}
where $D^2u=(\partial_{ij}u)$ is the Hessian matrix of $u$.
Therefore, the solvability of $L_p$-Minkowski problems is equivalent to the solvability of the Monge-Amp\`ere equation \eqref{PDEu}. 

In the complete, noncompact case with certain boundary conditions, whenever a convex solution of \eqref{PDEu} is given, as a rescaled support function it determines the hypersurface $M$ in the following way (see \cite{CY,Po}): 
Let $\Omega^*=Du(\Omega)$ and
	\begin{equation}\label{dual graph}
		u^*(y) = \sup\{ \langle x,y \rangle - u(x) : x\in\Omega \},\quad y\in\Omega^*.
	\end{equation}
Then $M$ is the graph $\{(y,u^*(y)) : y\in\Omega^*\}$, and its $p$-curvature is equal to $f^{-1}=K_p$ as prescribed.

By straightforward computations, the dual function $u^*$ satisfies
	\begin{equation}\label{PDEu*}
		\det\,D^2u^* = \frac{(1+|Du^*|^2)^{\frac{n+p+1}{2}}}{(y\cdot Du^*-u^*)^{p-1}}\, f^{-1}(\gamma),\quad y\in\Omega^*,
	\end{equation}
where $\gamma=\frac{(Du^*,-1)}{\sqrt{1+|Du^*|^2}}$ is the unit normal of $M$ at the point $(y,u^*(y))$.

\vspace{10pt}

\section{The case of $p<1$}

In this section we first show a nonexistence result for hypersurface $M$ with support function $H\geq0$. 
Then alternatively, we consider the hypersurface satisfying \eqref{re p<1} with $H<0$ and prove Theorem \ref{mt1.1}.
When the prescribed function $\hat f$ satisfies further boundedness conditions, we also give some independent and interesting results for the existence and completeness. 
Throughout this section we assume $p<1$. 

Originally, one asks for a strictly convex $C^2$-hypersurface $M$ in $\mathbb{R}^{n+1}$ such that
	\begin{equation}\label{p<1}
		H = (f K)^{\frac{1}{1-p}},
	\end{equation}
for some prescribed function $f\geq0$ on the spherical image $D\subset\mathbb{S}^n_-$, where $H$ is the support function of $M$. 

\begin{lemma}[Nonexistence] \label{nonexistence}
If $f\in L^{1}(D)$ is a nonnegative function, there does not exist any complete, noncompact, strictly convex hypersurface $M$ satisfying \eqref{p<1}. 
\end{lemma}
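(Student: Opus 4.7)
The plan is to argue by contradiction: assume such an $M$ exists with $f\in L^1(D)$, and derive $\int_D f\,d\sigma=+\infty$. First I would rewrite this integral as one over $M$ via the inverse Gauss map. From \eqref{p<1} we have $f=H^{1-p}/K$, and since $1/K$ is the Jacobian of the parametrization $X:D\to M$, the change of variables gives
\[
\int_D f(\xi)\,d\sigma(\xi) \;=\; \int_D \frac{H(\xi)^{1-p}}{K(\xi)}\,d\sigma(\xi) \;=\; \int_M H(X)^{1-p}\,dA(X),
\]
where on the right $H(X):=\langle X,\nu(X)\rangle\ge 0$. Since $M$ is complete and noncompact, its surface area satisfies $|M|=+\infty$.

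The hypothesis $H\ge 0$ means the origin lies in the closed convex region $\overline K$ bounded by $M$. If $0$ lies in the interior of $\overline K$, then $d_0:=\dist(0,M)>0$ yields $H\ge d_0$ on $D$, so $\int_M H^{1-p}\,dA\ge d_0^{1-p}|M|=+\infty$ and we are done. The remaining case is $0\in M$; then by the strict $C^2$-convexity of $M$, the support function $H$ vanishes only at the single direction $\xi_0:=\nu(0)\in D$, and the vanishing is quadratic (since $\nabla_{ij}H(\xi_0)$ must be positive definite in view of the positive Gauss curvature at $0$). In particular $H^{1-p}/K$ is locally integrable near $\xi_0$ (as $n+2(1-p)>0$), so the contribution from a small neighborhood of $\xi_0$ is finite. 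Outside such a neighborhood---equivalently, outside a compact portion of $M$ containing $0$---I would show that $H$ stays bounded below by a positive constant $c$, which combined with the infinite area of the complement yields $\int_M H^{1-p}\,dA=+\infty$.

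The main obstacle is the uniform lower bound $H\ge c>0$ on $M$ away from the origin, particularly as points escape to infinity. Strict convexity is the essential input: if $X_k\in M$ with $|X_k|\to\infty$ and $H(X_k)\to 0$, then the tangent hyperplanes at $X_k$ converge along a subsequence to a supporting hyperplane $T_\infty$ of $\overline K$ passing through the origin; by strict convexity $T_\infty\cap\overline K=\{0\}$, and the positivity of the Gauss curvature at $0$ forces the cross-section of $\overline K$ in a narrow slab about $T_\infty$ to be bounded, confining $X_k$ to a bounded region and contradicting $|X_k|\to\infty$. Making this precise calls for a careful analysis of the asymptotic cone of $\overline K$ together with the quadratic estimate for $H$ at $\xi_0$, and is the crux of the argument.
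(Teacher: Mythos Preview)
Your overall strategy---rewriting $\int_D f$ as $\int_M H^{1-p}\,dA$ via the inverse Gauss map and showing the latter is infinite---is exactly what the paper does. The decomposition, however, is different. The paper does not split into the cases $0\in\mathrm{int}\,\overline K$ versus $0\in M$; instead it works directly at $\partial D$: if $H\equiv 0$ on $\partial D$ then $M$ would be a cone or degenerate, so there is a boundary point near which $H\ge\delta/4$ on a set $E\subset D$ with $E\cap\partial D\neq\emptyset$, and the corresponding piece $X(E)\subset M$ has infinite $\mathcal{H}^n$-measure by a result of Wu. This sidesteps your Case~2 entirely.

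Your Case~1 is clean and correct. In Case~2, however, the slab argument as written has a gap: from $H(X_k)=\langle \xi_k,X_k\rangle\to 0$ you cannot infer that $\langle \xi_\infty,X_k\rangle$ is small, so nothing places $X_k$ inside the slab about $T_\infty$ (indeed $X_k\in T_k$ and $T_k\to T_\infty$, but $X_k$ may sit arbitrarily far out on $T_k$). The boundedness of $\overline K$ in that slab is true, but it does not by itself trap $X_k$. A quicker route to the contradiction is this: since the inverse Gauss map $X:D\to M$ is a homeomorphism and hence proper, $|X_k|\to\infty$ forces every subsequential limit $\xi_\infty$ of $\xi_k$ to lie in $\partial D$; on the other hand $T_\infty$ is a supporting hyperplane of $\overline K$ passing through the point $0\in M$, so by strict convexity and $C^1$-smoothness it must be the tangent plane at $0$, giving $\xi_\infty=\xi_0\in D$, which contradicts $\xi_\infty\in\partial D$. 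With this repair your Case~2 goes through. (Incidentally, the local integrability remark near $\xi_0$ is unnecessary: since $1-p>0$, the function $H^{1-p}$ vanishes at $\xi_0$ rather than blowing up.)
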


\begin{proof}
Since $M$ is convex, $K\geq0$. From assumption $f\geq0$, the support function $H\geq0$ by \eqref{p<1}.
If $H=0$ on $\partial D$, then either $M$ is a cone or $M$ is degenerate with zero $n$-dimensional Hausdorff measure, $\mathcal{H}^n(M)=0$. 
Therefore, we assume that $\sup_{\partial D}H\geq\delta$ for some positive constant $\delta$. 
By continuity of $H$, there exists a subset $E\subset D$ such that $H\geq\delta/4$ in $E$.
Let $G:=X(E)\subset M$, where $X$ is the inverse Gauss map, see \eqref{supp}. 
Since $M\in C^2$ is strictly convex, the map $X : D\to M$ is a bijection.
Since $M$ is complete noncompact and $E\cap\partial D\neq\emptyset$, we have $\mathcal{H}^n(G)=\infty$, \cite{Wu}. Then by integration we obtain
	\begin{equation}\label{contra}
	\begin{split}
		\infty &= \int_G (\delta/4)^{1-p} \, d\mathcal{H}^n \leq \int_E \frac{H^{1-p}}{K} dx \\
		&\leq \int_D \frac{H^{1-p}}{K} dx = \int_D f dx <\infty,
	\end{split}
	\end{equation}
where $dx$ is the spherical measure of $\mathbb{S}^n$.
The last equality is due to \eqref{p<1}.
This is a contradiction to the assumption $f\in L^1(D)$, and thus Lemma \ref{nonexistence} is proved.
\end{proof}

We remark that in proving the above lemma, one can in fact show that the set $\{\xi\in D : H(\xi)=0\}$ has zero $\mathcal{H}^n$ measure, where $H\geq0$ is the support function of a complete, noncompact, strictly convex hypersurface $M$. 
Hence, the contradiction \eqref{contra} will occur under the assumption of Lemma \ref{nonexistence}.

Therefore, in the case of $p<1$, it is reasonable to consider the existence of hypersurface $M$ satisfying \eqref{re p<1} and $H\leq0$, that is
	\begin{equation}\label{p<1a}
		H = -\hat f K^{\frac{1}{1-p}}.
	\end{equation}
where $\hat f\geq0$ is a prescribed function on the spherical image $D$. 
Then one's aim is to look for a complete, noncompact, strictly convex hypersurface $M$ satisfying \eqref{p<1a}.
And in such cases, $K_p={\hat{f}}^{p-1}=K(-H)^{p-1}$ is the $p$-curvature of $M$. 

Note that $H\leq0$ implies that any tangent hyperplane $T$ to $M$ either contains the origin, or else, the origin lies on the opposite side of $T$ from $M$. 
If $0\in M$, it follows that every tangent hyperplane to $M$ must contains $0$ and $H\equiv0$, which implies that $M$ is a hyperplane containing the origin or a cone with vertex at the origin, but this contradicts with the strict convexity of $M$. 
Now assume that $0\notin M$, the origin lies on the opposite sides of all tangent hyperplanes from $M$, or equivalently, $H<0$ in $D$.  

Let $\mathcal{C}$ be the intersection of all closed halfspaces $\mathcal{P}$ of $\mathbb{R}^{n+1}$ with $0\in\partial\mathcal{P}$ and $M\subset\mathcal{P}$. 
Then $\mathcal{C}$ is a closed convex cone with vertex at the origin with nonempty interior. 
Moreover, $\partial\mathcal{C}$ can be represented as the graph of a convex degree one homogeneous function $\psi$ with $\psi\geq0$ in $\mathbb{R}^n-\{0\}$ and $|D\psi|$ bounded.
Recall that $M$ is a graph of $u^*$ over $\Omega^*\subset\mathbb{R}^n$, from the construction of $\mathcal{C}$, $Du^*(\Omega^*)\subset D\psi(\mathbb{R}^n)$.
Because $M$ is complete and $Du^*$ is bounded, we must have $\Omega^*=\mathbb{R}^n$, namely $M$ is an entire graph over $\mathbb{R}^n$.
By parallel translating supporting hyperplanes between $M$ and $\mathcal{C}$, one also has
	\begin{equation*}
		\overline{Du^*(\mathbb{R}^n)} = D\psi(\mathbb{R}^n),
	\end{equation*}
see \cite{U1} for more geometric details. 
Therefore, the spherical image of $M$,
	\begin{equation*}
		D = \frac{(Du^*,-1)}{\sqrt{1+|Du^*|^2}}(\mathbb{R}^n)
	\end{equation*}
must be strictly contained in $\mathbb{S}^n_-$, and its projection image $\Omega=\{\lambda\xi : \xi\in D, \lambda>0\}\cap\{x_{n+1}=-1\}$ must be a bounded, convex domain in $\mathbb{R}^n$.

Next lemma shows that under hypotheses of Theorem \ref{mt1.1}, $M$ is asymptotically approaching to $\mathcal{C}$ in the sense that $H(X)\to0$ as $X\to\partial D$.
\begin{lemma}[Asymptotic] \label{Asymptotic}
Under the hypotheses of Theorem \ref{mt1.1}, let $M$ be a solution satisfying \eqref{p<1a}. 
If $\hat f \in L^{1-p}(D)$, then $H(X)\to0$ as $X\to\partial D$.
\end{lemma}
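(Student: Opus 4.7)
The plan is to argue by contradiction, following the same integral--geometric template as Lemma \ref{nonexistence}. Raising \eqref{p<1a} to the power $1-p$ gives
\begin{equation*}
\hat{f}^{1-p}(\xi) \;=\; \frac{(-H(\xi))^{1-p}}{K(\xi)},\qquad \xi\in D,
\end{equation*}
so that the hypothesis $\hat{f}\in L^{1-p}(D)$ becomes $\int_D (-H)^{1-p}/K\,dx <\infty$, where $dx$ denotes the spherical measure on $\mathbb{S}^n$. Since $1/K$ is the Jacobian of the inverse Gauss map $X:D\to M$, this integral equals $\int_M (-H)^{1-p}\,d\mathcal{H}^n$, i.e., the $L^{1-p}$ integral of $-H$ over $M$ with respect to its intrinsic surface measure.

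Suppose, toward a contradiction, that $H(\xi)\not\to 0$ as $\xi\to\partial D$. Then there exist a constant $\delta>0$, a boundary point $\xi_0\in\partial D$, and a sequence $\xi_j\in D$ converging to $\xi_0$ with $-H(\xi_j)\geq\delta$. By continuity of $H$ in $D$, the open set
\begin{equation*}
E \;:=\; \{\xi\in D : -H(\xi) > \delta/2\}
\end{equation*}
contains every $\xi_j$, so that $\xi_0\in \overline{E}\cap\partial D$. I would then set $G=X(E)\subset M$. Exactly as in the proof of Lemma \ref{nonexistence}, the fact that $M$ is complete and noncompact while $\overline{E}$ meets $\partial D$ forces $\mathcal{H}^n(G)=\infty$, via the result of Wu cited there.

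On the other hand, using the change of variables $d\mathcal{H}^n = K^{-1}\,dx$ together with the pointwise bound $-H>\delta/2$ on $E$ yields
\begin{equation*}
\mathcal{H}^n(G) \;=\; \int_E \frac{1}{K}\,dx \;=\; \int_E \frac{\hat{f}^{1-p}}{(-H)^{1-p}}\,dx \;\leq\; \left(\frac{2}{\delta}\right)^{1-p} \int_D \hat{f}^{1-p}\,dx \;<\;\infty,
\end{equation*}
contradicting $\mathcal{H}^n(G)=\infty$. Hence $H(\xi)\to 0$ as $\xi\to\partial D$.

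The only delicate point in this scheme is the infinite-measure conclusion $\mathcal{H}^n(G)=\infty$; however, the geometric setup here (a complete, noncompact, strictly convex hypersurface $M$ whose spherical image $D\subset\mathbb{S}^n_-$ has a boundary point in $\overline{E}$) is identical to that of Lemma \ref{nonexistence}, so the same ingredient applies verbatim and the proof reduces to the elementary integral estimate above. If one prefers to avoid citing the result of Wu, the infinite-measure claim can also be justified directly: any relatively compact piece of $M$ is mapped by the Gauss map into a compact subset of $D$, so a set $E$ whose closure contains a boundary point of $D$ necessarily corresponds to an unbounded portion of $M$, which, by completeness and strict convexity, has infinite $n$-dimensional Hausdorff measure.
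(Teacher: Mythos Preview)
Your proposal is correct and follows essentially the same argument as the paper: assume by contradiction that $-H$ stays bounded below by a positive constant on a set $E\subset D$ whose closure meets $\partial D$, invoke Wu's result to conclude $\mathcal{H}^n(X(E))=\infty$, and then derive a contradiction from the identity $\hat f^{1-p}=(-H)^{1-p}/K$ together with $\hat f\in L^{1-p}(D)$. Your construction of $E$ as a sublevel set via a sequence is in fact slightly cleaner than the paper's phrasing, but the substance is identical.
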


\begin{proof}
Suppose if not true, by continuity of $H$, there exists some $X_0\in\partial D$ and a positive constant $\delta$ such that $-H\geq\delta$ in a neighborhood of $X_0$, $E\subset D$. 
Let $G:=X(E)\subset M$, where $X$ is the inverse Gauss map, which is a bijection from $D$ to $M$.
As $X_0\in E\cap\partial D$, $E\cap\partial D\neq\emptyset$. 
Since $M$ is complete noncompact, one has $\mathcal{H}^n(G)=\infty$, \cite{Wu}.
Then similarly to Lemma \ref{nonexistence}, by integration we have
	{\begin{equation}\label{contra2}
	\begin{split}
		\infty &= \int_G (\delta/4)^{1-p} \, d\mathcal{H}^n \leq \int_E \frac{(-H)^{1-p}}{K} dx \\
		&\leq \int_D \frac{(-H)^{1-p}}{K} dx= \int_D \hat f^{1-p} dx <\infty,
	\end{split}
	\end{equation}
where $dx$ is the spherical measure of $\mathbb{S}^n$.
The last equality is due to \eqref{p<1a}.}
By assumption $\hat f\in L^{1-p}(D)$, this contradiction thus implies that $H(X)\to0$ as $X\to\partial D$.
\end{proof}

In a local coordinate chart, by \eqref{Htou},
	\begin{equation*}
		u = \mu H < 0 \quad \mbox{in }\Omega.
	\end{equation*}
Since $M$ is enclosed by the asymptotic cone $\mathcal{C}$, $u=0$ on $\partial\Omega$.
It is also clear that $M$ is complete if and only if $Du(\Omega)=\mathbb{R}^n$.

From the computations in \S2, the above question \eqref{p<1a}  is related to a variant of Monge-Amp\`ere equation  
	\begin{equation*}
		\mu^{-1}u = -\hat f \mu^{-\frac{n+2}{1-p}}\left(\det\,D^2u\right)^{-\frac{1}{1-p}}\quad \mbox{in }\Omega.
	\end{equation*}
Hence, by Lemma \ref{Asymptotic} we pose the following boundary value problem 
	\begin{eqnarray}
		\det\,D^2u \!\!&=&\!\! \mu^{-(n+p+1)} \hat f^{1-p} \left(\frac{-1}{u}\right)^{1-p}\quad \mbox{in }\Omega, \label{PDE p<1}\\
		u \!\!&=&\!\! 0 \qquad  \mbox{on }\partial\Omega, \label{bc0 p<1} \\
		|Du(x)| \!\!&\to&\!\! \infty \qquad \mbox{as } x \to \partial\Omega. \label{bc com}
	\end{eqnarray}
Here, $\hat f(x)$ is interpreted as $\hat f\left(\frac{x,-1}{\sqrt{1+|x|^2}}\right)$ for $x\in\Omega$.

Therefore, in order to prove Theorem \ref{mt1.1}, it suffices to prove the following result.
\begin{theorem} \label{thm p<1}
Let $\Omega$ be a bounded, uniformly convex smooth domain in $\mathbb{R}^n$, and $\hat f\geq0$ be a smooth function in $\Omega$.
Suppose there exists two positive functions $g$ and $h$ satisfying conditions (a) and (b) in Theorem \ref{mt1.1} such that $C^{-1}h(\dist(x,\partial\Omega)) \leq \hat f(x)\leq Cg(\dist(x,\partial\Omega))$ near $\partial\Omega$ for some constant $C>0$.
Then \eqref{PDE p<1}--\eqref{bc com} has a unique smooth solution $u$.\end{theorem}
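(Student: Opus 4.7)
The plan is to prove Theorem \ref{thm p<1} by the classical combination of approximation, barriers, and interior regularity, adapted to the singular right-hand side that appears for $p<1$: regularise the singular Dirichlet problem, construct uniform two-sided barriers from the hypotheses (a) and (b), pass to the limit, and then use the comparison principle for uniqueness.

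First I would solve the regularised Dirichlet problem
\[
\det D^2 u_\epsilon \,=\, \mu^{-(n+p+1)}\hat f^{1-p}\,(\epsilon-u_\epsilon)^{p-1} \ \text{in}\ \Omega, \qquad u_\epsilon=0\ \text{on}\ \partial\Omega,
\]
for $\epsilon>0$. The right-hand side is now smooth, positive, bounded, and strictly increasing in $u_\epsilon\le 0$, so the Caffarelli--Nirenberg--Spruck theory on the uniformly convex smooth $\Omega$, combined with a continuity method, yields a unique convex classical solution $u_\epsilon\le 0$. Next, setting $d(x)=\dist(x,\partial\Omega)$, I would use $\hat f\le C g(d)$ to construct an upper barrier of the form $\bar u(x)=-A\,\Phi(d(x))$ with a radial profile $\Phi$ tailored to $g$: expanding the Monge--Amp\`ere operator in a normal/tangential decomposition (using uniform convexity of $\partial\Omega$ to control the Hessian of $d$) reduces the subsolution inequality to a one-dimensional ODE in $\Phi$, and condition (a) is exactly what makes the resulting $\Phi$ finite, so that $u=0$ on $\partial\Omega$ in the limit. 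Dually, using $\hat f\ge C^{-1} h(d)$, I would produce a lower barrier by comparing $u_\epsilon$ along inward normals with a one-dimensional profile whose normal derivative is forced to $-\infty$ by the divergence $\int_0^{r_0} h^{1-p}(t)\,dt=\infty$ from (b); this is the Monge--Amp\`ere Keller--Osserman mechanism that produces the completeness $|Du_\epsilon|\to\infty$ as $d\to 0$, uniformly in $\epsilon$.

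On any compact $K\Subset\Omega$ the upper barrier keeps $u_\epsilon$ strictly negative and uniformly bounded, so Caffarelli's interior $C^{2,\alpha}$ estimates (using $\hat f\in C^\alpha$) give $C^{2,\alpha}_{\mathrm{loc}}$ compactness; a subsequence $u_\epsilon\to u\in C^{2,\alpha}(\Omega)\cap C(\overline\Omega)$ then solves the original singular equation, with $u=0$ on $\partial\Omega$ and $|Du|\to\infty$ on $\partial\Omega$ thanks to the two barriers. Uniqueness follows from the comparison principle, since $u\mapsto(-u)^{p-1}$ is strictly increasing on $u<0$ for $p<1$, making the right-hand side of \eqref{PDE p<1} strictly increasing in $u$. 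The main obstacle is the precise barrier construction: matching the integrability/non-integrability conditions (a) and (b) with the ODE governing the Monge--Amp\`ere operator on radial profiles, and propagating these pointwise barriers uniformly in $\epsilon$ so that both \eqref{bc0 p<1} and \eqref{bc com} survive in the limit.
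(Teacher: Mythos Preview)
Your barrier strategy is the same as the paper's: condition (a) produces a subsolution vanishing on $\partial\Omega$ (pinning $u=0$ there), and condition (b) produces a supersolution with diverging normal derivative (forcing $|Du|\to\infty$). The paper, however, bypasses your regularisation entirely and applies the Perron method directly to the singular problem: once a subsolution $w$ with $w=0$ on $\partial\Omega$ is constructed, one takes $u=\sup\{\tilde u:\tilde u\text{ subsolution},\ \tilde u\le 0\text{ on }\partial\Omega\}$ and obtains a generalised solution immediately. Interior smoothness then follows from standard Monge--Amp\`ere regularity, since on any $K\Subset\Omega$ the right-hand side is smooth, positive and bounded.

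Your regularisation step has a concrete gap. You claim the right-hand side of the $\epsilon$-problem is bounded, so that Caffarelli--Nirenberg--Spruck applies on $\Omega$. But condition (b), namely $\int_0^{r_0}h^{1-p}(t)\,dt=\infty$ with $1-p>0$, forces $h$ and hence $\hat f$ to blow up near $\partial\Omega$; therefore $\hat f^{1-p}$ is unbounded, and the regularised Dirichlet problem on all of $\Omega$ is \emph{not} covered by the standard CNS existence theory. This is repairable (solve on the inner domains $\Omega_\delta$, use your barriers uniformly in $\delta$, then let $\delta\to0$ before $\epsilon\to0$), but as written the first step does not close. The paper's Perron route avoids this entirely, since it needs only a subsolution; the explicit one used is $v(x)=-(-\rho(d))^{\varepsilon}$ with $\varepsilon=\frac{n}{n+1-p}$ and $\rho(d)=-\int_0^d\bigl(\int_s^{r_0}g^{1-p}(t)\,dt\bigr)^{1/n}ds$, after which a rescaling makes it a subsolution.
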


The Dirichlet problem \eqref{PDE p<1}--\eqref{bc0 p<1} was previously studied by Cheng-Yau \cite{CY}. They proved that if $\Omega$ satisfies a uniform enclosing sphere condition and $\hat f(x)\leq C\dist(x,\partial\Omega)^{\beta-n-1}$, $\beta>0$, then there admits a unique solution. If $\hat f\equiv 1$, while $\Omega$ is merely a bounded convex domain, Urbas \cite{U1} also obtained the unique existence of convex solution. One can easily check that the assumption on $\hat f$ in \cite{CY} is contained in condition (a) of Theorem \ref{thm p<1}.
We divide the proof of Theorem \ref{thm p<1} into two parts: Prove the solvability of Dirichlet problem \eqref{PDE p<1}--\eqref{bc0 p<1}, and verify the solution satisfies boundary condition \eqref{bc com}.

\subsection{Existence}

For the existence part, we consider a general Dirichlet boundary condition
	\begin{equation}\label{bc1 p<1}
		u = \phi \quad\mbox{ on }\partial\Omega,
	\end{equation}
where $\phi\leq0$ is a convex function in $\Omega$. 
Write
	\begin{equation}\label{def r}
		R(x) = \mu^{-(n+p+1)} \hat f^{1-p}(x), \quad x\in\Omega.
	\end{equation}
Equation \eqref{PDE p<1} can be simplified to
	\begin{equation} \label{PDE p<1 s}
		\det\,D^2u = R(x) \left(\frac{-1}{u}\right)^{1-p}.
	\end{equation}
	
Let $\Omega_r=\{x\in\Omega : \dist(x,\partial\Omega)>r\}$. When $\Omega$ is uniformly convex, for $r_0>0$ small depending on the geometry of $\Omega$, $\Omega_r$ is still uniformly convex. For $x\in\Omega\setminus\Omega_{r_0}$, $x$ can be represented uniquely by $x_b+d n(x_b)$, where $x_b\in\partial\Omega$, $d=\dist(x,\partial\Omega)$, and $n(x_b)$ is the unit inner normal at $x_b$. For a function $f$ defined near $\partial\Omega$ we write $f(x)=f(x_b,d)$.
	
\begin{lemma} \label{lexi}
Let $\Omega$ be a bounded, uniformly convex $C^2$-domain. 
Suppose there exists a positive function $g$ in $(0,r_0]$ satisfying
	\begin{equation}\label{cond g}
		\int_0^{r_0}\left( \int_s^{r_0} g(t) dt \right)^{1/n} ds < \infty
	\end{equation}
such that 
	\begin{equation*}
		R(x_b,d) \leq g(d).
	\end{equation*}
Then \eqref{PDE p<1 s} admits a unique generalised solution $u$ satisfying \eqref{bc1 p<1}.
\end{lemma}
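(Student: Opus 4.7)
The plan is to solve \eqref{PDE p<1 s} by a Perron / monotone-iteration scheme, with condition (a) supplying a barrier that encodes the admissible singular behaviour near $\partial\Omega$.

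\textbf{Barrier.} The central preliminary step is to produce a convex $w\in C(\overline\Omega)$ with $w=0$ on $\partial\Omega$, $w<0$ in $\Omega$, and $\det D^2 w \geq c_0\, g(d(x))$ in the collar $\Omega\setminus\Omega_{r_0}$, where $d(x)=\dist(x,\partial\Omega)$. Setting
$$\Psi(s)=\int_0^s\Bigl(\int_t^{r_0} g(\tau)\,d\tau\Bigr)^{1/n}dt,$$
condition (a) makes $\Psi$ bounded on $[0,r_0]$, nondecreasing, and concave (since $\Psi''=-g/(n(\Psi')^{n-1})\leq 0$). Define $w(x)=-\Psi(d(x))$. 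Concavity of $d$ on the convex domain $\Omega$, together with the monotonicity and concavity of $\Psi$, gives convexity of $w$. Computing in an orthonormal frame whose first vector is $\nabla d$ — using that $D^2 d$ annihilates $\nabla d$ and, by uniform convexity of $\Omega$, restricts to a negative definite matrix $A$ on the level sets with $\det(-A)\geq c_1>0$ for $d\in(0,r_0]$ — yields $\det D^2 w \geq c_0\, g(d)$.

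\textbf{Iteration.} I would define $T$ as the map sending a convex $v\leq 0$ with $v=\phi$ on $\partial\Omega$ to the Aleksandrov generalised solution $u$ of $\det D^2 u=R(x)(-1/v)^{1-p}$, $u=\phi$ on $\partial\Omega$; existence of $u$ follows from Aleksandrov's solvability theorem once the right-hand side is checked to be a finite Borel measure, which reduces via $R\leq g$ and the barrier $w$ to the integrability built into condition~(a). Since $v\mapsto (-1/v)^{1-p}$ is \emph{order-reversing} on $v<0$ (larger $|v|$ giving a smaller value, as $1-p>0$), $T$ is order-reversing and $T^2$ is order-preserving. I would iterate $T^2$ starting from a subsolution $\lambda w+\phi$ (with $\lambda$ large enough that the Monge--Amp\`ere mass of $\lambda w$ dominates the expected right-hand side on the relevant range) and from the supersolution $\phi$ itself, producing two monotone sequences sandwiching a fixed point of $T$. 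The Monge--Amp\`ere comparison principle yields $u\geq \lambda w+\phi$ uniformly along the iteration, hence a uniform $C^0$ bound and, because $w\to 0$ on $\partial\Omega$, the boundary continuity $u=\phi$.

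\textbf{Uniqueness.} If $u_1,u_2$ are two generalised solutions with the same boundary data, on $S=\{u_1>u_2\}$ one has $|u_1|<|u_2|$, hence $(-1/u_1)^{1-p}>(-1/u_2)^{1-p}$ (since $1-p>0$), so $\det D^2 u_1>\det D^2 u_2$ on $S$. Since $u_1=u_2$ on $\partial S$, the Monge--Amp\`ere comparison principle forces $u_1\leq u_2$ on $S$, a contradiction unless $S=\emptyset$; symmetrically $\{u_2>u_1\}=\emptyset$, so $u_1\equiv u_2$.

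\textbf{Main obstacle.} The delicate point is calibrating the barrier with the iteration so the a priori bound closes: condition (a) controls only the \emph{size} of $R$ through $g$, not the nonlinear $u$-dependence of the right-hand side, so the scaling $\lambda$ in the subsolution $\lambda w+\phi$ must be chosen in terms of the sought-for uniform bound itself. Handling this coupled estimate — and ensuring that the limit is genuinely an Aleksandrov solution without mass loss at $\partial\Omega$ — is the main technical hurdle, and is exactly where the full strength of the integral condition~(a) is used.
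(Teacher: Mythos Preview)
Your barrier $w=-\Psi(d)$ satisfies $\det D^2 w \geq c_0\, g(d)$ near $\partial\Omega$, but this is \emph{not} enough to make $\lambda w+\phi$ a subsolution of \eqref{PDE p<1 s}, and the gap does not close by scaling. Near $\partial\Omega$ one has $\lambda w+\phi\approx -\lambda\Psi(d)$, so with $1-p>0$ the right-hand side behaves like $g(d)\,\lambda^{p-1}\Psi(d)^{p-1}$, which blows up as $d\to 0$ since $\Psi(d)\to 0$. Meanwhile $\det D^2(\lambda w)\approx \lambda^n c_0\, g(d)$ is only of order $g(d)$. The required inequality $\lambda^{n+1-p} c_0 \geq \Psi(d)^{p-1}$ therefore fails near $\partial\Omega$ for every fixed $\lambda$. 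Your ``Main obstacle'' paragraph flags exactly this coupling but does not resolve it; without a genuine subsolution the monotone iteration cannot start, and for the same reason the first application of $T$ need not even produce a finite Monge--Amp\`ere measure, so the appeal to Aleksandrov's solvability is unjustified.

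The paper's fix is to build the nonlinearity into the barrier itself: in place of $-\Psi(d)$ one takes $v=-\Psi(d)^\varepsilon$ with $\varepsilon=n/(n+1-p)\in(0,1)$. A direct computation gives $\det D^2 v \geq C\, g(d)\,\Psi(d)^{n(\varepsilon-1)}$, and $\varepsilon$ is chosen precisely so that $n(\varepsilon-1)=-\varepsilon(1-p)$, i.e.\ the extra power of $\Psi$ on the left matches $(-1/v)^{1-p}=\Psi^{-\varepsilon(1-p)}$ on the right. After multiplying by a constant $b$ (which now works because $b^n$ beats $b^{p-1}$), $bv$ is an honest subsolution of the nonlinear equation all the way to $\partial\Omega$; then $\phi+bv$ feeds into a single Perron step, with no $T^2$ iteration needed. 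Your uniqueness argument via comparison is correct and matches the paper.
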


\begin{proof}
Our proof is inspired by \cite{CW95}.
For $x=x_b+dn(x_b)$ in $\Omega\setminus\Omega_{r_0}$, we define
	\begin{equation}
		v(x) = \tilde\rho(d) := -(-\rho(d))^\varepsilon,
	\end{equation}
where $\varepsilon\in(0,1)$ is a constant to be determined and 
	\begin{equation*}
		\rho(d) := - \int_0^d\left( \int_s^{r_0} g(t) dt \right)^{1/n} ds.
	\end{equation*} 

By computations, see \cite[Lemma 1]{CW95},
	\begin{equation*}
		\det\,D^2v(x) =\prod_{i=1}^{n-1}\frac{k_i(x_b)}{1-k_i(x_b)d}(-\tilde\rho'(d))^{n-1}\tilde\rho''(d)
	\end{equation*}
in $\Omega\setminus\Omega_{r_0}$, where $k_i(x_b)$, $i=1,\cdots,n-1$, are the principal curvatures of $\partial\Omega$ at $x_b$.	

By differentiation
	\begin{eqnarray*}
		&& \tilde\rho' = \varepsilon(-\rho)^{\varepsilon-1}\rho', \\
		&& \tilde\rho'' = \varepsilon(-\rho)^{\varepsilon-1}\rho'' + \varepsilon(1-\varepsilon)(-\rho)^{\varepsilon-2}(\rho')^2. 
	\end{eqnarray*}
Hence,
	\begin{equation}
	\begin{split}
		\det\,D^2v(x) &= \varepsilon^n\prod_{i=1}^{n-1}\frac{k_i(x_b)}{1-k_i(x_b)d}\left(-(-\rho)^{\varepsilon-1}\rho'\right)^{n-1}\left[(-\rho)^{\varepsilon-1}\rho'' + (1-\varepsilon)(-\rho)^{\varepsilon-2}(\rho')^2\right] \\
			&\geq \varepsilon^n\prod_{i=1}^{n-1}\frac{k_i(x_b)}{1-k_i(x_b)d} (-\rho')^{n-1}\rho''(-\rho)^{n(\varepsilon-1)} \\
			&\geq \varepsilon^nC(n,\Omega) g(d) (-\rho)^{n(\varepsilon-1)}. 
	\end{split}
	\end{equation}

By setting $\varepsilon=\frac{n}{n+1-p}$ and rescaling $v$ to $bv$ for a constant $b$ satisfying $b^{n+1-p}\varepsilon^nC(n,\Omega)\geq1$, one can see that $\det\,D^2v\geq R(x)(-1/v)^{1-p}$ in $\Omega\setminus\Omega_{r_0}$.

Observing that $v$ is a negative constant on $\partial\Omega_{r}$ for $r\in(0,r_0)$, we can extend $v$ to $\Omega_{r_0}$ so that $\det\,D^2v=\epsilon (-1/v)^{1-p}$ in $\Omega_{r_0}$. For $\epsilon$ small, $v$ is uniformly convex in $\Omega$. Similarly, by a rescaling we have $v$ is a subsolution of \eqref{PDE p<1 s} in $\Omega$ and $v=0$ on $\partial\Omega$. 

Let $w=\phi+v$, where $\phi$ is a nonpositive, convex function in $\Omega$. 
Since $w\leq v\leq0$, $(-1/v)^{1-p}\geq (-1/w)^{1-p}$, one can see that $w$ is a subsolution of \eqref{PDE p<1 s} in $\Omega$ and satisfies $w=\phi$ on $\partial\Omega$.

Last step is to use the Perron method as in \cite{CW}. 
Denote $\Phi$ by the set of all subsolutions of \eqref{PDE p<1 s} and \eqref{bc1 p<1}, and let $u(x)=\sup\{\tilde u(x) : \tilde u\in\Phi\}$.
One can easily verify that $u$ is a generalised solution of \eqref{PDE p<1 s}. Since $w\in\Phi$ we conclude that $u=\phi$ on $\partial\Omega$. 
The uniqueness is due to the comparison principle \cite{CY,GT}.
\end{proof}

One example for $g$ satisfying \eqref{cond g} is that $g(d) \leq Cd^{\beta-n-1}$ for some $\beta>0$, which is also the case considered in \cite{CY}. Notice that when $\beta\geq n+1$, $g$ is bounded in $(0,r_0]$ and thus $R$ in \eqref{PDE p<1 s} is bounded in $\Omega$.
In such a case, we can reduce the uniform convexity assumption on $\Omega$ in Lemma \ref{lexi} following the work in \cite{U1}.

\begin{lemma}
If $R$ in \eqref{PDE p<1 s} is bounded from above and $\Omega$ is a bounded convex domain, the Dirichlet problem \eqref{PDE p<1 s} and \eqref{bc1 p<1} admits a unique generalised solution. 
\end{lemma}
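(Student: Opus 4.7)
The plan is to remove the uniform convexity and smoothness hypotheses of Lemma \ref{lexi} by exhausting $\Omega$ with uniformly convex $C^2$-subdomains, in the spirit of Urbas \cite{U1}.

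Choose uniformly convex $C^2$-domains $\Omega_m$ with $\Omega_m \Subset \Omega_{m+1}$ and $\bigcup_m \Omega_m = \Omega$. Since $R$ is bounded above by some constant $M$, condition \eqref{cond g} is trivially satisfied by the constant function $g \equiv M$, so Lemma \ref{lexi} supplies, for each $m$, a unique generalised solution $u_m$ of \eqref{PDE p<1 s} on $\Omega_m$ with $u_m = \phi$ on $\partial\Omega_m$ (recall $\phi$ is a convex, nonpositive function defined throughout $\Omega$, so this boundary assignment is meaningful).

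To obtain uniform bounds on the $u_m$, note first that convexity together with $\phi \leq 0$ on $\partial\Omega_m$ forces $u_m \leq 0$ in $\Omega_m$. For a lower bound, fix $x_0,\,r$ with $\Omega \subset B_r(x_0)$ and set
\[
\underline u(x) := -C + \tfrac12\bigl(|x-x_0|^2 - r^2\bigr).
\]
Choosing $C$ large enough (depending only on $M$, $p$, $r$, and $\sup_\Omega |\phi|$), one verifies $\det D^2 \underline u = 1 \geq M(-1/\underline u)^{1-p}$ on $\Omega$ and $\underline u \leq -\sup|\phi| \leq \phi$ on $\partial\Omega_m$. Since the RHS of \eqref{PDE p<1 s} is monotone increasing in $u$ on $\{u<0\}$, the comparison principle gives $\underline u \leq u_m \leq 0$ on $\Omega_m$ uniformly in $m$. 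A uniformly bounded family of convex functions on a convex open set is locally Lipschitz with constant depending on the distance to the boundary, so Arzel\`a--Ascoli extracts a subsequence $u_{m_j}$ converging locally uniformly on $\Omega$ to a convex function $u$ with $\underline u \leq u \leq 0$. The weak-$\ast$ stability of the Monge-Amp\`ere measure under locally uniform convergence of convex functions, combined with the fact that $u$ must stay strictly negative on compact subsets of $\Omega$ (were $u$ to touch zero on a set of positive measure, the densities $R(-1/u_{m_j})^{1-p}$ would blow up, contradicting weak-$\ast$ convergence to $\det D^2 u$), shows that $R(-1/u_{m_j})^{1-p} \to R(-1/u)^{1-p}$ locally uniformly and that $u$ is a generalised solution of \eqref{PDE p<1 s} in $\Omega$.

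It remains to verify the boundary condition $u = \phi$ on $\partial\Omega$; uniqueness then follows from the comparison principle, which also forces the whole sequence $u_m$ (not merely the subsequence) to converge to $u$. The upper side $\limsup_{x\to x_0} u(x) \leq \phi(x_0)$ for $x_0 \in \partial\Omega$ comes from affine upper barriers provided by supporting hyperplanes to the graph of $\phi$. The matching lower bound is the main obstacle: without smoothness or uniform convexity of $\partial\Omega$, the global distance-function subsolution of Lemma \ref{lexi} is unavailable. Instead, at each $x_0 \in \partial\Omega$ I would construct a local convex subsolution from a supporting hyperplane of $\Omega$ at $x_0$ plus a small quadratic correction arranged to take the value $\phi(x_0)$ at $x_0$; the $L^\infty$ bound on $R$ makes this pointwise construction feasible and delivers $\liminf_{x\to x_0} u(x) \geq \phi(x_0)$.
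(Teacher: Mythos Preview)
Your exhaustion-by-smooth-subdomains strategy and the use of a global quadratic lower barrier to get uniform bounds are fine, and in spirit match what the paper does. The gap is in the final step, where you verify the boundary data $u=\phi$; here your sketch does not work.

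For the lower bound $\liminf_{x\to x_0}u(x)\ge\phi(x_0)$ you propose a ``supporting hyperplane plus a small quadratic correction.'' A function $w$ of this sort has $\det D^2w$ bounded, but the right-hand side $R(-1/w)^{1-p}$ blows up wherever $w\to0^{-}$. If $\phi(x_0)=0$ (the case of principal interest, cf.\ \eqref{bc0 p<1} and Lemma~\ref{Asymptotic}), your candidate $w$ cannot be a subsolution near $x_0$. The paper handles exactly this singularity: it first treats the case $\phi\equiv0$ and, at each boundary point (after rotating so that $0\in\partial\Omega$ and $\Omega\subset\{x_n>0\}$), uses the barrier
\[
w(x)=(|x'|^2-A)\,x_n^{\delta},\qquad \delta=\frac{2}{\,n+1-p\,},
\]
whose Monge--Amp\`ere measure has precisely the power-law blow-up needed to dominate $R(-1/w)^{1-p}$. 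This barrier is below each $v_k$ and vanishes at the boundary point, forcing $v^*:=\lim v_k$ to satisfy $v^*=0$ on $\partial\Omega$.

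For the upper bound you invoke ``affine upper barriers provided by supporting hyperplanes to the graph of $\phi$.'' But supporting hyperplanes to a convex function lie \emph{below} its graph, so such an affine $\ell$ satisfies $\ell\le\phi$ and hence $\ell\le u_m$ on $\partial\Omega_m$; the comparison principle then gives nothing in the direction $u_m\le\ell$. The paper avoids this issue entirely by using zero boundary data on $\Omega_k$: then $v_k\le0$ by convexity is the trivial upper bound, and monotonicity of $\{v_k\}$ (which you lose by putting $\phi$ on $\partial\Omega_m$) makes the limit argument cleaner. Only after $v^*$ is in hand does the paper pass to general $\phi$ by setting $w^*=v^*+\phi$ as a subsolution with the correct boundary data and invoking the Perron method, just as in Lemma~\ref{lexi}.

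In short: reduce to $\phi\equiv0$ first, replace your quadratic barrier by the power-type barrier $(|x'|^2-A)x_n^{\delta}$ with $\delta=2/(n+1-p)$, and then recover general $\phi$ via Perron.
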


\begin{proof}
First, we consider the zero boundary condition $\phi\equiv0$. 
Let $\{\Omega_k\}$ be an increasing sequence of smooth uniformly convex subdomains of $\Omega$ with $\Omega=\cup\Omega_k$.
Let $\{v_k\}$ be the sequence of convex solution of \eqref{PDE p<1 s} in $\Omega_k$, and $v_k=0$ on $\partial\Omega_k$. 
Since $\{\Omega_k\}$ is an increasing sequence, $\{v_k\}$ is a decreasing sequence, by the comparison principle \cite{CY,GT}. 
We will show that $v^*=\lim_{k\to\infty}v_k$ exists, and $v^*=0$ on $\partial\Omega$.

Under a suitable coordinate we may assume that $0\in\partial\Omega$ and $\Omega\subset\{x_n>0\}$. 
Since $\Omega$ is bounded, there exists a large $K>0$ such that $\Omega\subset B_K^+(0)=B_K(0)\cap\{x_n>0\}$.
Define the barrier function
	\begin{equation*}
		w(x) = (|x'|^2-A) x_n^\delta,
	\end{equation*}
where $x'=(x_1,\cdots,x_{n-1})$ and $A > R^2$, $\delta\in(0,1)$ are to be fixed.
One can see that $w$ is convex, and by computation \cite{U1}
	\begin{equation*}
	\begin{split}
		\det\,D^2w =& \left\{2^{n-1}\delta(1-\delta)(A-|x'|^2) - 2^n\delta^2|x'|^2\right\} \\
			& \times (A-|x'|^2)^{\frac{2}{\delta}-n} \left(\frac{-1}{w}\right)^{\frac{2}{\delta}-n}.
	\end{split}
	\end{equation*}

If $n\geq2$ we choose $\delta=\frac{2}{n+1-p} \in (0,1)$, and then fix $A>K^2$ sufficiently large, so that
	\begin{equation*}
		\det\,D^2w \geq R(x)\left(\frac{-1}{w}\right)^{1-p} \quad \mbox{ in } B_K^+(0).
	\end{equation*}
When $n=1$, if $p<0$ we obtain a similar inequality with $\delta=\frac{2}{2-p}<1$, and if $0\leq p<1$ we can choose any $\delta\in(0,1)$.
Since $\Omega_k\subset B_K^+$ and $v_k=0$ on $\partial\Omega_k$, by the comparison principle we have $w\leq v_k$ in $\Omega_k$ for each $k$. 
By a similar argument at any boundary point of $\Omega$ we see that $v^*=\lim_{k\to\infty}v_k$ is well defined and is a convex generalised solution of \eqref{PDE p<1 s} satisfying $v^*=0$ on $\partial\Omega$.

For general boundary value \eqref{bc1 p<1}, let $w^*=v^*+\phi$, where $\phi\leq0$ is convex in $\Omega$. Then $w^*$ is a subsolution of \eqref{PDE p<1 s} and $w=\phi$ on $\partial\Omega$. 
Using the Perron method as in Lemma \ref{lexi}, we then obtain the generalised solution of \eqref{PDE p<1 s} and \eqref{bc1 p<1}. 
The uniqueness is due to the comparison principle \cite{CY,GT}.
\end{proof}


\subsection{Completeness}
Since the spherical image of $M$ is strictly contained in $\mathbb{S}^n_-$, in order to be complete, $M$ must be an entire graph, namely the scaled support function $u$ must satisfy $|Du(x)|\to\infty$ as $x\to\partial\Omega$.

\begin{lemma}
Let $\Omega$ be a bounded, uniformly convex $C^2$-domain. 
Suppose that there exists a positive function $h$ in $(0,r_0]$ satisfying 
	\begin{equation}\label{cond ha}
		\int_0^{r_0}h(t)dt=\infty,
	\end{equation}
such that 
	\begin{equation}\label{cond h}
		\frac{h(d)}{R(x_b,d)}=o(1) \quad \mbox{as } d\to 0.
	\end{equation}
Then the solution $u$ of \eqref{PDE p<1 s} and \eqref{bc1 p<1}, produced by Lemma \ref{lexi}, satisfies $|Du(x)|\to\infty$ as $x\to\partial\Omega$. In particular, if $\phi=0$ on $\partial\Omega$, condition \eqref{cond h} can be reduced to
	\begin{equation}\label{cond h c}
		h(d)\leq C R(x_b,d)
	\end{equation}
for a constant $C>0$.
\end{lemma}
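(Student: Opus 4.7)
The goal is to show $|Du(x)|\to\infty$ as $x\to\partial\Omega$; equivalently, at each boundary point $x_0\in\partial\Omega$ the inner-normal derivative of the convex solution $u$ diverges to $-\infty$. My plan is a local barrier argument. Near such an $x_0$, on a tubular shell $N_{r_0}=\{x_b+d\nu(x_b):x_b\in\partial\Omega\cap B_r(x_0),\,0\le d\le r_0\}$, I construct a supersolution $\bar u$ of \eqref{PDE p<1 s} satisfying $\bar u=u=\phi$ on $\partial\Omega\cap B_r(x_0)$, $\bar u\ge u$ on the inner face $\{d=r_0\}$, and with one-dimensional profile $\psi$ along each inner normal having $\psi(0)=0$, $\psi<0$, and $\psi'(0^+)=-\infty$. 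Since the right-hand side of \eqref{PDE p<1 s} is monotone increasing in $u$ for $u<0$ and $p<1$, the comparison principle gives $u\le\bar u$ in $N_{r_0}$, and a one-dimensional convexity argument then forces $\partial_\nu u\to-\infty$.

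For the construction, I extend $\phi$ to a convex $C^2$ function $\Phi$ in a neighborhood of $\partial\Omega$ and take the ansatz $\bar u(x_b,d)=\Phi(x_b)+\psi(d)$ with $\psi$ convex and decreasing on $(0,r_0]$, $\psi(0)=0$. The coordinate computation from Lemma~\ref{lexi} gives $\det D^2\bar u=\prod_{i=1}^{n-1}\tfrac{k_i(x_b)}{1-k_i(x_b)d}(-\psi')^{n-1}\psi''$ modulo lower-order terms from $D^2\Phi$. After absorbing the boundary-curvature product and the extension terms into a small constant, the supersolution inequality $\det D^2\bar u\le R(-1/\bar u)^{1-p}$ reduces to $(-\psi')^{n-1}\psi''\le\varepsilon\,h(d)(-\Phi(x_b)-\psi)^{p-1}$ for a small $\varepsilon>0$, which is made available by the hypothesis $h/R=o(1)$. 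Setting $q=-\psi'$ and treating $-\Phi-\psi$ as essentially constant in a first pass (legitimate when $\phi(x_0)<0$ since $\Phi$ stays away from $0$, and handled separately when $\phi(x_0)=0$), one integrates the ODE $-q^{n-1}q'\simeq\varepsilon\,h(d)$ to obtain $q^n(d)\gtrsim\int_d^{r_0}h(s)\,ds$. The non-integrability $\int_0^{r_0}h\,dt=\infty$ therefore forces $q(d)\to\infty$, i.e.\ $\psi'(0^+)=-\infty$, while $\psi$ itself stays bounded and can be normalised to $\psi(0)=0$.

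For the comparison step, adjust the free additive constant in $\psi$ (or truncate at $d=r_0$) so that $\bar u\ge u$ on $\{d=r_0\}$, which is possible because $u$ is bounded on the compact set $\{d=r_0\}\subset\Omega$; apply the comparison principle to conclude $u\le\bar u$ in $N_{r_0}$. Restricting to the inner normal from any $x_b$ near $x_0$, the convex one-variable function $g(s):=u(x_b+s\nu(x_b))-\phi(x_b)$ satisfies $g(0)=0$ and $g(s)\le\psi(s)$; since $\psi(s)/s\to-\infty$, its right-derivative $g'(0^+)=-\infty$, and for a convex one-variable function this equals $\lim_{s\to0^+}g'(s)$, giving $\partial_\nu u\to-\infty$ as $x\to\partial\Omega$ along inner normals. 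Combined with upper semicontinuity of $|Du|$ for convex $u$, this yields $|Du(x)|\to\infty$ as $x\to\partial\Omega$. When $\phi\equiv 0$ on $\partial\Omega$, the factor $(-1/u)^{1-p}\sim d^{p-1}$ in \eqref{PDE p<1 s} is itself singular ($p-1<0$) and supplies the extra room in the supersolution estimate, so the weaker hypothesis $h\le CR$ suffices; one proceeds analogously but with $-\bar u=-\psi$ vanishing at the boundary, and the ODE analysis now directly carries the factor $(-\psi)^{p-1}$ on the right.

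The main obstacle is the uniform verification of the supersolution inequality on the shell. The ansatz produces $\psi$ with the correct qualitative blow-up, but ensuring $\det D^2\bar u\le R(-1/\bar u)^{1-p}$ pointwise on $N_{r_0}$ requires careful absorption of the boundary-curvature factor $\prod k_i/(1-k_i d)$ and the tangential Hessian of $\Phi$; it is precisely to dominate these lower-order contributions that the strict domination $h/R=o(1)$ is needed in the general case, whereas in the zero-boundary case the intrinsic boundary singularity of the right-hand side of \eqref{PDE p<1 s} automatically absorbs them.
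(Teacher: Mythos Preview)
Your barrier profile $\psi$ and the way you derive it from the ODE $(-\psi')^{n-1}\psi''\sim h(d)$ coincide with the paper's choice $w=-a\rho(d)+\phi$, $\rho(d)=\int_0^d(\int_s^{r_0}h)^{1/n}\,ds$. The difficulty is entirely in the comparison step, and there your argument has a real gap.

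First, the shell $N_{r_0}$ you work on is local ($x_b\in\partial\Omega\cap B_r(x_0)$), so it has a lateral boundary where you never arrange $\bar u\ge u$. More seriously, your fix for the inner face $\{d=r_0\}$---``adjust the free additive constant in $\psi$''---destroys the conclusion. If you replace $\bar u$ by $\bar u+c$ with $c>0$, then on $\partial\Omega$ you only get $u\le\phi+c$, and along the inner normal the one-sided difference quotient is bounded by $(\psi(d)+c)/d$. For any $h$ satisfying $\int_0^{r_0}h=\infty$ (e.g.\ $h(t)=t^{-1}$ or $t^{-2}$) one checks that $|\psi(d)|/d$ blows up strictly slower than $1/d$, so $c/d$ dominates and the quotient tends to $+\infty$, not $-\infty$; the gradient blow-up is lost. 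In addition, when $\phi$ is allowed to touch $0$ on $\partial\Omega$, adding $c>0$ forces $\bar u+c>0$ there and the equation no longer makes sense for the barrier.

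The paper avoids all of this by making the supersolution \emph{global}. One takes $w=-a\rho(d)+\phi$ on the full annulus $\Omega\setminus\Omega_{r_0}$, passes to the sub-level set $\Omega'=\{w<\inf_{\partial\Omega}\phi-\varepsilon\}$ (on whose boundary $w$ is constant), and extends $w$ convexly across $\Omega'$ to obtain $\tilde w\in C^2(\Omega)$ with $\tilde w=w$ near $\partial\Omega$. The comparison is then applied on all of $\Omega$, where the only boundary is $\partial\Omega$ and $\tilde w=\phi=u$ there exactly; no inner face and no additive constant are needed. On $\Omega\setminus\Omega'$ one has the uniform bound $-\tilde w\le-\inf_{\partial\Omega}\phi+\varepsilon$, which gives a fixed lower bound $C_1=(-\inf_{\partial\Omega}\phi+\varepsilon)^{p-1}$ for $(-1/\tilde w)^{1-p}$, and the hypothesis $h/R=o(1)$ then suffices for the supersolution inequality. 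For $\phi\equiv0$ the paper does not invoke a boundary singularity argument; instead it fixes $a=1$ and replaces $\tilde w$ by $b\tilde w$ with $b>0$ small, using the homogeneity $\det D^2(b\tilde w)=b^n\det D^2\tilde w$ versus $(-1/(b\tilde w))^{1-p}=b^{p-1}(-1/\tilde w)^{1-p}$ and $n>p-1$, so that the weaker condition $h\le CR$ is enough.
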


\begin{proof}
Adopting the notations from \S3.1.
In $\Omega\setminus\Omega_{r_0}$ we define
	\begin{equation}\label{cst sup}
		w(x) = -a\rho(d)+\phi = -a\int_0^d\left(\int_s^{r_0}h(t)dt\right)^{1/n}ds + \phi,
	\end{equation}
which is uniformly convex for $a>0$, $w=\phi$ on $\partial\Omega$ and $|Dw(x)|\to\infty$ as $x\to\partial\Omega$. 
Since $\partial\Omega\in C^2$, for sufficiently large $a>0$ we have an estimate
	\begin{equation}
	\begin{split}
		\det\,D^2w &\leq (2a)^n\prod_{i=1}^{n-1}\frac{k_i(x_b)}{1-k_i(x_b)d}(- \rho'(d))^{n-1} \rho''(d) \\
			&\leq (2a)^n C h(d),
	\end{split}
	\end{equation}
where $C$ is a constant depending on $n$ and $\partial\Omega$.
	
Recall that $\phi\leq0$. Let $\Omega':=\{x\in\Omega : w(x)<\inf_{\partial\Omega}\phi - \varepsilon\}$ be a sub-level set of $w$, which is uniformly convex. Choose $\varepsilon>0$ sufficiently small such that $\Omega_{r_0}\Subset\Omega'$ and $\Omega'\Subset\Omega$.
Note that $w$ is constant on $\partial\Omega'$, we can extend $w$ inside $\Omega'$ similarly as before and then modify $w$ to get a uniformly convex function $\tilde w\in C^2(\Omega)$ such that $\tilde w=w$ in $\Omega\setminus\Omega'$.
Near $\partial\Omega$, observe that in $\Omega\setminus\Omega'$
	\begin{equation*}
		R(x_b,d)\left(-\frac{1}{w}\right)^{1-p} \geq R(x_b,d)C_1,
	\end{equation*}
where $C_1=(-\inf_{\partial\Omega}\phi +\varepsilon)^{p-1}>0$ is a finite constant. 
Therefore, $\tilde w$ is a supersolution of \eqref{PDE p<1 s}--\eqref{bc1 p<1}, and by the comparison principle, $\tilde w\geq u$ in $\Omega$.
Hence, $|Du(x)|\to\infty$ as $x\to\partial\Omega$. 

In the special case $\phi\equiv0$, we set $a=1$ in \eqref{cst sup}, and at the end, replace $\tilde w$ by $b\tilde w$ with $b>0$ sufficiently small, so that we can obtain a supersolution.	
\end{proof}

An example for $h$ satisfying \eqref{cond ha} is that $h(t)=t^{-\alpha}$ for some $\alpha>1$.
Alternatively, when studying the homothetic solutions to Gauss curvature flow, Urbas \cite{U1} proved that if $\hat f\equiv1$ in $\Omega$, $\phi=0$ on $\partial\Omega$, then for a range of $p$, $|Du(x)|\to\infty$ as $x\to\partial\Omega$. 
Inspired by that, we have the following results. 

\begin{lemma}\label{lUr1}
Let $\Omega$ be a bounded convex domain and $\partial\Omega\in C^{1,1}$.
Assume that $\phi=0$ on $\partial\Omega$, $\hat f>0$ in $\overline\Omega$.
Then, when $p\leq0$, the solution $u$ of \eqref{PDE p<1 s}--\eqref{bc1 p<1} satisfies $|Du(x)|\to\infty$ as $x\to\partial\Omega$.
\end{lemma}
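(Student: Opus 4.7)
The strategy adapts Urbas \cite{U1}, who treated the case $\hat f \equiv 1$. Since $\hat f \in C^0(\overline\Omega)$ with $\hat f > 0$, the coefficient $R(x) = \mu^{-(n+p+1)}\hat f^{1-p}(x)$ in \eqref{PDE p<1 s} is bounded between positive constants, $c_1 \leq R(x) \leq c_2$ in $\Omega$. This uniform control is what permits importing Urbas's constant-coefficient barriers to the present setting. The plan is to construct a convex function $w \in C(\overline\Omega)$ satisfying $w = 0$ on $\partial\Omega$, $|Dw(x)| \to \infty$ as $x \to \partial\Omega$, and the supersolution inequality $\det D^2 w \leq R(-w)^{p-1}$ in $\Omega$. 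The comparison principle of \cite{CY,GT}, applied to $u$ and $w$ with coinciding zero boundary data, then yields $u \leq w \leq 0$ in $\Omega$, and since $-u \geq -w$ with both vanishing on $\partial\Omega$, the inward normal derivative of $-u$ dominates that of $-w$, forcing $|Du| \to \infty$ at $\partial\Omega$.

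The $C^{1,1}$ regularity of $\partial\Omega$ ensures that $d(x) := \dist(x,\partial\Omega)$ is $C^{1,1}$ in a tubular neighborhood $\Omega \setminus \Omega_{r_0}$, with $|Dd| = 1$ and eigenvalues of $D^2 d$ equal to $0$ in the direction $Dd$ and $-k_i(x_b)/(1-k_i(x_b)d) \leq 0$ in the tangential directions, where $k_i(x_b) \geq 0$ are the principal curvatures of $\partial\Omega$ at the foot $x_b$ (nonnegative by the convexity of $\Omega$). For strict $p < 0$, I adopt the ansatz $w(x) = -A\, d(x)^\alpha$ with $\alpha = (n+1)/(n+1-p) \in (0,1)$. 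A direct Hessian computation yields
\[
\det D^2 w = A^n \alpha^n (1-\alpha)\, d^{n\alpha-n-1}\prod_{i=1}^{n-1}\frac{k_i(x_b)}{1-k_i(x_b)d},
\]
and the exponent identity $n\alpha - n - 1 = \alpha(p-1)$ matches the power of $d$ in $R(-w)^{p-1} = R A^{p-1} d^{\alpha(p-1)}$. Choosing $A$ sufficiently large (depending on $c_2$, $n$, and $\sup_{\partial\Omega}\prod_i k_i$) secures the supersolution inequality near $\partial\Omega$; at boundary points where some $k_i$ vanishes, the left-hand side is zero and the inequality is trivial, so only $C^{1,1}$ convexity is needed, not uniform convexity.

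For the borderline case $p = 0$, the exponent matching forces $\alpha = 1$, which produces only bounded gradient, so I substitute the logarithmically enhanced barrier $w(x) = -A\, d(x) \bigl[\log(r_0/d(x)) + C_0\bigr]^\beta$ with $\beta \in (0, 1/(n+1))$ and $C_0 > 0$. An analogous computation shows $w$ remains a supersolution with $|Dw(x)| \sim A [\log(r_0/d)]^\beta \to \infty$. In either case I extend the local barrier to $\Omega_{r_0}$ by gluing with a smooth convex function (or a negative constant) as in the proof of Lemma \ref{lexi}, producing a global convex supersolution with the required boundary behavior. The main obstacle is accommodating the absence of uniform convexity of $\partial\Omega$---resolved by the automatic vanishing of $\det D^2 w$ at flat boundary points---together with the endpoint $p = 0$, which requires the logarithmic modification of the power barrier.
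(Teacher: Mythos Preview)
Your construction differs from the paper's. The paper does not build a global barrier from $d(x)$; instead it localises at each $x_0\in\partial\Omega$ by placing an interior ball $B\subset\Omega$ tangent at $x_0$ (guaranteed by the $C^{1,1}$ assumption) and compares $u$ on $B$ with the radially symmetric solution $w$ of $\det D^2 w=\underline R\,(-1/w)^{1-p}$ in $B$, $w=0$ on $\partial B$, where $\underline R=\inf_B R$. Since $w\ge u$ on $B$, it suffices to show $|Dw|\to\infty$ at $x_0$, and this is done by an integral contradiction: if $N=\sup_B|Dw|<\infty$ then $|w|\le Nd$ and
\[
\omega_n N^n=\int_B\det D^2 w=\underline R\int_B(-1/w)^{1-p}\ge \underline R\,N^{p-1}\int_B d^{\,p-1},
\]
which diverges for every $p\le 0$. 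This handles $p=0$ and $p<0$ in one stroke and avoids any interior gluing.

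Your explicit power-barrier route is workable, but there is a sign slip that makes the step fail as written. With $w=-Ad^\alpha$ one has $\det D^2w\asymp A^n d^{\alpha(p-1)}$ while $R(-w)^{p-1}\asymp R\,A^{p-1}d^{\alpha(p-1)}$, so the supersolution inequality $\det D^2w\le R(-w)^{p-1}$ forces $A^{n+1-p}$ to be bounded by $c_1/\bigl[\alpha^n(1-\alpha)\sup\prod k_i\bigr]$; since $n+1-p>0$ you need $A$ \emph{small}, not large, and the relevant bound on $R$ is the lower one $c_1$, not $c_2$. With that correction the comparison $u\le w$ and the blow-up $|Dw|=A\alpha d^{\alpha-1}\to\infty$ go through as you describe. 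The paper's route is shorter and sidesteps both the logarithmic modification at $p=0$ and the interior extension; your approach, once the parameter is fixed, has the advantage of yielding an explicit upper envelope for $u$ near $\partial\Omega$.
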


\begin{proof}
The proof follows from \cite{U1}.
Let $x_0\in\partial\Omega$ and let $B$ be an interior ball at $x_0$, i.e., $B\subset\Omega$ and $\partial B\cap\partial\Omega=\{x_0\}$.
From assumptions, $\underline{R}=\inf_{x\in B}R(x)$ is a positive constant, where $R$ is defined in \eqref{def r}.
Let $w$ be the unique convex solution of the Dirichlet problem
	\begin{eqnarray}
		&& \det\,D^2w = \underline{R}\left(\frac{-1}{w}\right)^{1-p} \quad\mbox{ in }B, \label{supw}\\
		&& w=0	\qquad \mbox{ on } \partial B,	\nonumber
	\end{eqnarray}
The solvability is due to Lemma \ref{lexi}, and the solution $w$ is radially symmetric since the above problem has at most one convex solution.
Hence, $w$ is a supersolution of \eqref{PDE p<1 s} and \eqref{bc1 p<1}, and $w\geq u$ in $B$ by the comparison principle.
So, it suffices to show that $|Dw(x)|\to\infty$ as $x\to x_0$. 
Suppose on the contrary that $N=\sup_B|Dw|<\infty$, then $|w|\leq Nd$ where $d=\dist(x,\partial B)$, and
	\begin{equation}
	\begin{split}
		\omega_nN^n &= |Dw(B)| \\
			&= \int_B \det\,D^2w \\
			&= \underline{R} \int_B \left(\frac{-1}{w}\right)^{1-p} \\
			&\geq \underline{R}N^{p-1}\int_B d^{p-1}.
	\end{split}
	\end{equation}
When $p\leq0$, the last integral is infinite, which gives a contradiction. 
Therefore, $N=\infty$, and Lemma \ref{lUr1} is proved.
\end{proof}

The following lemma shows that in order for $M$ to be complete, it is necessary to have $\hat f(\xi)\to\infty$ as $\xi\to\partial D$, where $D\Subset\mathbb{S}^n_-$ is the spherical image of $M$.

\begin{lemma}\label{lUr2}
If $0<p<1$, $\phi=0$ on $\partial\Omega$, $\hat f$ is bounded above in $\Omega$, and $\Omega$ satisfies a uniform enclosing sphere condition (namely, there exists a $K>0$ such that for each $x_0\in\partial\Omega$ there is a ball $B$ of radius $K$ with $\Omega\subset B$ and $\partial B\cap\partial\Omega=\{x_0\}$), 
then the solution $u$ of \eqref{PDE p<1 s}--\eqref{bc1 p<1} satisfies $\sup_\Omega|Du|\leq C$.
\end{lemma}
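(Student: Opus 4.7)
The plan is to compare $u$ with a radial subsolution built on the enclosing sphere and then convert the resulting pointwise estimate into a gradient bound using convexity. Since $\hat f$ and $\mu$ are both bounded on the bounded set $\Omega$, the coefficient $R(x)=\mu^{-(n+p+1)}\hat f^{1-p}$ appearing in \eqref{PDE p<1 s} satisfies $R(x)\leq R_0$ for some constant $R_0>0$. Fix any $x_0\in\partial\Omega$ and let $B=B_K(y_0)$ be an enclosing ball as in the hypothesis, tangent to $\partial\Omega$ at $x_0$. Applying Lemma \ref{lexi} on $B$ with the constant majorant $g\equiv R_0$ (which trivially satisfies condition (a)), we obtain a convex solution $\tilde u$ of $\det D^2\tilde u=R_0(-\tilde u)^{p-1}$ in $B$ with $\tilde u=0$ on $\partial B$; uniqueness forces $\tilde u(x)=\phi(|x-y_0|)$ with $\phi(K)=0$ and $\phi'(0)=0$.

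The core step is to bound $\|D\tilde u\|_\infty$ via an energy-type integration of the radial ODE $\phi''(\phi'/r)^{n-1}=R_0(-\phi)^{p-1}$. Multiplying through by $(n+1)r^{n-1}\phi'$ and using the identities $(n+1)\phi''(\phi')^{n}=((\phi')^{n+1})'$ and $\phi'(-\phi)^{p-1}=-\tfrac{1}{p}((-\phi)^{p})'$, the equation becomes $\tfrac{d}{dr}[(\phi')^{n+1}]=-\tfrac{(n+1)R_0}{p}r^{n-1}\tfrac{d}{dr}[(-\phi)^p]$. Integrating from $0$ to $r$, integrating by parts on the right, and setting $M:=-\phi(0)$ while estimating $\int_0^r(-\phi)^p s^{n-2}\,ds\leq M^p r^{n-1}/(n-1)$, we arrive at
\[
(\phi'(r))^{n+1}\leq\frac{(n+1)R_0 M^p K^{n-1}}{p}.
\]
Combined with $M=\int_0^K\phi'(r)\,dr\leq K\sup_{[0,K]}\phi'$, this bootstraps to $M\leq\bar C$ and hence $\sup_B|D\tilde u|=\phi'(K)\leq\bar C'$, with constants depending only on $n$, $p$, $R_0$, and $K$.

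For the comparison step, since $R\leq R_0$ on $\Omega\subset B$ we have $\det D^2\tilde u\geq R(x)(-\tilde u)^{p-1}$ on $\Omega$, so $\tilde u$ is a subsolution of the equation satisfied by $u$, and $\tilde u\leq 0=u$ on $\partial\Omega$. The nonlinearity $s\mapsto R(x)(-s)^{p-1}$ is nondecreasing in $s<0$ because $p-1<0$, so the comparison principle used in the proof of Lemma \ref{lexi} gives $\tilde u\leq u$ in $\Omega$, and hence $|u(y)|\leq|\tilde u(y)|$ throughout $\Omega$.

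Finally, we convert this into a gradient bound. For $y\in\Omega$ close to $\partial\Omega$, let $x_0'\in\partial\Omega$ be a closest boundary point with outer unit normal $n$ (well-defined by the enclosing sphere condition) and set $d:=\dist(y,\partial\Omega)$. Using the enclosing ball centered at $y_0'=x_0'-Kn$, $|y-y_0'|=K-d$, so $|u(y)|\leq|\phi(K-d)|\leq\bar C'\,d+O(d^2)$. On the other hand, convexity of $u$ together with $u|_{\partial\Omega}=0$ yields, for any $q\in\partial u(y)$, the elementary bound $|q|\,\rho(y,q/|q|)\leq|u(y)|$, where $\rho(y,v)$ is the distance from $y$ to $\partial\Omega$ in the direction $v$; and $\rho(y,v)\geq d$ trivially. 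This gives $|Du(y)|\leq\bar C'+o(1)$ as $d\to 0$. For $y$ staying in a fixed compact subset of $\Omega$, the usual interior estimate $|Du(y)|\leq\sup_\Omega|u|/\dist(y,\partial\Omega)$ is bounded in terms of the fixed lower bound on the distance. Together these give $\sup_\Omega|Du|\leq C$.

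I expect the main obstacle to be the energy integration in the second step, whose bootstrap structure relies specifically on $0<p<1$ (via positivity of $p$ to absorb the inhomogeneity and $p<1$ to ensure convexity of the radial profile); the remaining steps are fairly routine once the barrier $\tilde u$ is in hand.
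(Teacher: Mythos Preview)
Your proof is correct and follows the same overall architecture as the paper: build a radial subsolution $\tilde u$ on the enclosing ball, compare to get $\tilde u\le u$, bound $\sup|D\tilde u|$, and transfer this to $u$. The difference lies in how the two middle steps are executed.

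For the bound on $\sup_B|D\tilde u|$, the paper uses a much shorter Monge--Amp\`ere mass argument that mirrors Lemma~\ref{lUr1}. Writing $N=\sup_B|D\tilde u|$ and observing that convexity together with $\tilde u|_{\partial B}=0$ gives $|\tilde u|\ge\theta\,\dist(\cdot,\partial B)$ for some $\theta>0$, one has
\[
\omega_n N^n=\int_B\det D^2\tilde u=R_0\int_B(-\tilde u)^{p-1}\le R_0\,\theta^{p-1}\int_B \dist(\cdot,\partial B)^{\,p-1},
\]
and the last integral is finite precisely when $p>0$. Your energy integration of the radial ODE reaches the same conclusion but is considerably more involved; the paper's method has the added virtue that it is the exact dual of the divergence argument in Lemma~\ref{lUr1}, making the dichotomy between $p\le0$ and $0<p<1$ transparent.

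For the transfer step, the paper again takes the shortest route: since $\tilde u\le u$ in $\Omega$ with equality at the tangency point $x_0\in\partial\Omega\cap\partial B$, the inward normal derivative of $u$ at $x_0$ is dominated by that of $\tilde u$, giving $|Du|\le N$ on $\partial\Omega$ directly. Your detour through $|u(y)|\le \bar C'd$ and the subgradient inequality $|q|\rho(y,q/|q|)\le|u(y)|$ is valid, but the claim $|y-y_0'|=K-d$ tacitly assumes that the segment from $y$ to its closest boundary point is aligned with the radius of the enclosing ball, which need not hold at nonsmooth boundary points; this is easily repaired (take $x_0$ to be the point where the ray from $y$ in direction $q/|q|$ meets $\partial\Omega$, so that $K-|y-y_0|\le\rho$), but the paper's one-line comparison at $x_0$ avoids the issue entirely.
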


\begin{proof}
Let $B$ be an enclosing ball at any point $x_0\in\partial\Omega$.
From assumptions, $\overline{R}=\sup_{x\in\Omega}R(x)$ is a positive constant. 
Replacing $\underline{R}$ in \eqref{supw} by $\overline{R}$, the convex solution $w$ will be a subsolution of \eqref{PDE p<1 s} and \eqref{bc1 p<1}, and a gradient bound for $u$ follows if we can prove $N=\sup_\Omega|Dw|<\infty$. 
Since $w$ is convex, $w\not\equiv0$ and $w=0$ on $\partial B$, we have $|w|\geq\theta d$ for some positive constant $\theta$, where $d=\dist(x,\partial B)$.
Proceeding as above, we now obtain
	\begin{equation*}
		\omega_nN^n \leq \overline{R}\theta^{p-1} \int_B d^{p-1}.
	\end{equation*}
The last integral is finite if $p>0$. Therefore, we have a gradient bound for $w$, and hence also for the solution $u$.
This completes the proof of Lemma \ref{lUr2}.
\end{proof}

\vspace{10pt}

\section{The case of $p>1$}

Recall that in \S1 we know that for a complete, noncompact, convex hypersurface $M$ in $\mathbb{R}^{n+1}$, by a suitable rotation of coordinates its spherical image $D\subset\mathbb{S}^n_-$ satisfies one and exactly one of three cases (I), (II) and (III).
Given a function $f$ on $D$, we investigate the existence of $M$ such that $f=H^{1-p}/K$ is the $p$-curvature function of $M$, where $H$ is the support function and $K$ is the Gauss curvature of $M$.
When $M$ is $C^2$ smooth, a function $f$ is the $p$-curvature function of $M$ if it satisfies equation \eqref{PDEH}, or \eqref{pde2} under an orthonormal frame field on $\mathbb{S}^n$. 
By the homogeneous extension \eqref{Htou}, one has $u$ satisfies equation \eqref{PDEu} in the domain $\Omega$, and $u^*$ satisfies \eqref{PDEu*} in $\Omega^*$. 
The hypersurface $M$ is then the graph of $u^*$ over $\Omega^*$.

Notice that by the convexity of $M$, $K$ is always nonnegative. However, the sign of the support function $H$ depending on the relative position of $M$ and the origin. As seen in \S3, the above problem is equivalent to \eqref{p<1} that
	\begin{equation}\label{p>1}
		\frac{1}{H} = \tilde f K^{\frac{1}{p-1}},
	\end{equation}
where $\tilde f$ is the given function on $D$. 
Similar to the nonexistence result in the case of $p<1$, i.e. Lemma \ref{nonexistence}, when $p>1$ we have the following analogous result. 

\begin{lemma}[Nonexistence] \label{non2}
If $\tilde f\leq0$ is a nonpositive function on $D$, satisfying $\tilde f\in L^{p-1}(D)$, there does not exist any complete, noncompact, strictly convex hypersurface $M\in C^2$ satisfying \eqref{p>1}.
\end{lemma}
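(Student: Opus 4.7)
The plan is to mirror the proof of Lemma \ref{nonexistence}, replacing the hypothesis $f \in L^1(D)$ by $\tilde f \in L^{p-1}(D)$ and adjusting for the opposite sign regime. First I would argue that $H < 0$ on $D$: strict convexity gives $K > 0$, so \eqref{p>1} forces $1/H = \tilde f K^{1/(p-1)} \leq 0$. As $H$ is continuous and finite on the open set $D$, we deduce $H < 0$ (and in particular $\tilde f < 0$) on $D$. Rewriting \eqref{p>1} then yields the pointwise identity
\begin{equation*}
\frac{1}{K} \;=\; (\tilde f H)^{p-1} \;=\; (|\tilde f|\,|H|)^{p-1} \qquad \text{on } D.
\end{equation*}

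The key ingredient, which I expect to be the crux of the argument, is to show that $|H|$ is bounded on $D$. Let $B$ denote the closed convex set with $M = \partial B$; since $H < 0$ on $D$, the origin lies strictly outside $B$, so $d_0 := \dist(0, B) > 0$ is finite. For any unit vector $\xi \in \mathbb{S}^n$ and any $y \in B$, Cauchy--Schwarz gives $\langle y, \xi \rangle \geq -\|y\|$; choosing a minimizer $y^* \in B$ of $\|y\|$, we obtain
\begin{equation*}
H(\xi) \;=\; h_B(\xi) \;\geq\; \langle y^*, \xi \rangle \;\geq\; -\|y^*\| \;=\; -d_0,
\end{equation*}
so $|H| \leq d_0$ uniformly on $D$.

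The contradiction then follows from the Gauss-map area formula $\mathcal{H}^n(M) = \int_D K^{-1}\,dx$ together with the fact that a complete, noncompact $M$ satisfies $\mathcal{H}^n(M) = \infty$ (cf.\ \cite{Wu}). Substituting the identity from the first step,
\begin{equation*}
\infty \;=\; \int_D (|\tilde f|\,|H|)^{p-1}\,dx \;\leq\; d_0^{\,p-1} \int_D |\tilde f|^{p-1}\,dx \;<\; \infty
\end{equation*}
by $\tilde f \in L^{p-1}(D)$, which is the desired contradiction. The main structural difference from Lemma \ref{nonexistence} is that here one needs an upper bound on $|H|$ (rather than a lower bound on $H$ near $\partial D$), because the factor $|H|^{p-1}$ with exponent $p-1>0$ must be controlled from above; the observation $|H|\le \dist(0,B)$ handles this cleanly without any further assumption.
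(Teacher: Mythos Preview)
Your proposal is correct and follows essentially the same route as the paper's proof: both establish $H<0$, bound $|H|$ uniformly by $d:=\dist(0,M)$ via the observation that every tangent hyperplane separates the origin from $M$, and then derive a contradiction by integrating the identity $K^{-1}=(|\tilde f|\,|H|)^{p-1}$ against the infinite surface area of $M$. The only cosmetic difference is that the paper phrases the integration over $M$ (using $dx=K\,d\mathcal{H}^n$) whereas you phrase it over $D$ (using $d\mathcal{H}^n=K^{-1}\,dx$).
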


\begin{proof}
Since $M$ is in the class $C^2$, by \eqref{p>1} we have $H<0$ and $0\notin M$. 
Hence, every tangent hyperplane $T$ of $M$ must pass between $0$ and $M$, so $\dist(0,T)\leq \dist(0,M)=:d$. 
Thus, $-H^{-1}\geq d^{-1}$. 
Since $M$ has infinite $n$-dimensional Hausdorff measure $\mathcal{H}^n$, by integrating we have
	\begin{equation*}
	\begin{split}
		\int_D |\tilde f|^{p-1} dx &= \int_M (-\tilde f)^{p-1} K  d\mathcal{H}^n\\
			&= \int_M (-\frac{1}{H})^{p-1} d\mathcal{H}^n \\
			&\geq \int_M (\frac{1}{d})^{p-1} d\mathcal{H}^n= \infty,
	\end{split}
	\end{equation*}
where $dx$ is the spherical measure of $\mathbb{S}^n$. The above inequality contradicts the assumption and thus completes the proof of Lemma \ref{non2}.
\end{proof}

In the subsequent context, we assume $\tilde f\geq0$. Let $f=\tilde f^{p-1}$, we consider the equation
	\begin{equation}\label{p>1 H}
		\frac{1}{K} = f H^{p-1},
	\end{equation}
where $p>1$. This is a counterpart of Equation \eqref{p<1a} in the case of $p<1$.
By the rescaling \eqref{Htou}, Equation \eqref{p>1 H} is equivalent to \eqref{PDEu}, namely
	\begin{equation}\label{PDEu p>1}
		\det\,D^2u = (1+|x|^2)^{-\frac{n+p+1}{2}} f\left(\frac{x,-1}{\sqrt{1+|x|^2}}\right)\,u^{p-1},\quad x\in\Omega,
	\end{equation}
where $\Omega = \{\lambda\xi : \xi\in D, \lambda>0\} \cap \{x_{n+1}=-1\}$, $u(x) = H(x,-1)$, and $x=(x_1,\cdots,x_n)$.
Depending on the spherical image $D\subset\mathbb{S}^n_-$ of $M$, let's consider the cases (I), (II) and (III) separately in the following. 
	
\subsection{Type I}	
For type I hypersurfaces, $D$ is strictly contained in $\mathbb{S}^n_-$, $\Omega$ is a bounded convex domain in $\mathbb{R}^n$. 
It is clear that $M$ is complete if and only if $\Omega^*=\mathbb{R}^n$, where $\Omega^*=Du(\Omega)$.
Thus we pose the following boundary conditions associated with equation \eqref{PDEu p>1}
	\begin{eqnarray}
		&& |Du(x)| \to \infty \quad \mbox{as } x\to\partial\Omega, \label{by complete} \\
		&& u(x) = \phi(x) \quad x\in\partial\Omega, \label{by Diri}
	\end{eqnarray}
where $\phi$ is assumed to be a positive, convex function in $\overline\Omega$.  
We remark that in \cite{U2}, when studying homothetic solutions of negative powered Gauss curvature flows, Urbas considered the above boundary value problem with $f\equiv1$ and $\phi=\infty$ on $\partial\Omega$.

The solvability of Dirichlet problem \eqref{PDEu p>1} and \eqref{by Diri} has been previously obtained in \cite[\S7]{CNS1}, \cite{Guan98} and \cite{Lio} under appropriate assumptions, especially $f$ is required to be positive and bounded. 
However, if the solution $u$ satisfies \eqref{by complete}, by integrating equation \eqref{PDEu p>1},
	\begin{equation*}
	\begin{split}
		\infty &= |Du(\Omega)| \\
			&= \int_\Omega \det\,D^2u \leq C\int_\Omega f u^{p-1}.
	\end{split}
	\end{equation*}
Notice that $0<u\leq\sup_{\partial\Omega}\phi$. 
If $\sup_{\partial\Omega}\phi<\infty$, then it is necessary to have $f(x)\to\infty$ as $x\to\partial\Omega$.
In the following, due to some technical differences, we consider two cases $p\in(1,n+1)$ and $p\in (n+1,\infty)$ separately. 
In each case, we first show the solvability of Dirichlet problem \eqref{PDEu p>1} and \eqref{by Diri}, and then prove that such an obtained solution $u$ satisfies \eqref{by complete}. 
Hence, Theorem \ref{thm I} is proved.
Our approach is inspired by the work of Chou-Wang \cite{CW95}, in which they considered the special case $p=1$, see also \cite{P2} and references therein.

\subsubsection{The case of $1<p<n+1$}

Write 
	\begin{equation*}
		R(x) = (1+|x|^2)^{-\frac{n+p+1}{2}} f\left(\frac{x,-1}{\sqrt{1+|x|^2}}\right),\quad x\in\Omega.
	\end{equation*}
Equation \eqref{PDEu p>1} can be reduced to
	\begin{equation}\label{PDEu1}
		\det\,D^2u = u^{p-1}R(x),\quad x\in\Omega. 
	\end{equation}
As in \S3, let $\Omega_r=\{x\in\Omega : \dist(x,\partial\Omega)>r\}$. 
For $x\in\Omega\setminus\Omega_{r_0}$, $r_0>0$ small, $x$ can be represented uniquely by $x_b+d n(x_b)$, where $x_b\in\partial\Omega$, $d=\dist(x,\partial\Omega)$, and $n(x_b)$ is the unit inner normal at $x_b$.
For a function $f$ defined near $\partial\Omega$ we write $f(x)=f(x_b,d)$.

\begin{lemma}\label{ex p<n+1}
Assume that $1\leq p <n+1$ and $\Omega\in C^2$ is uniformly convex. 
Suppose there exists a positive function $g$ in $(0,r_0]$, satisfying
	\begin{equation}\label{cong}
		\int_0^{r_0}\left(\int_s^{r_0}g(t)dt\right)^{1/n}ds < \infty,
	\end{equation} 
such that
	\begin{equation*}
		R(x_b,d) \leq g(d).
	\end{equation*}	
Then \eqref{PDEu1} admits a unique generalised solution $u$ in $C(\Omega)$ and $u=\phi$ on $\partial\Omega$.
\end{lemma}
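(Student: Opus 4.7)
The plan is to follow the Perron scheme of Lemma \ref{lexi}, modified to account for the fact that here $u>0$ on $\Omega$ and the right-hand side $u^{p-1}R(x)$ is degenerate rather than singular as $u\to 0$. A crucial simplification is that any convex solution $u$ of \eqref{PDEu1}--\eqref{by Diri} is a priori bounded above by $M:=\sup_{\partial\Omega}\phi<\infty$ (the maximum of a convex function is attained on $\partial\Omega$), and the constant $w\equiv M$ is a trivial supersolution of \eqref{PDEu1}. Hence along any convex candidate $\tilde u\le M$ the nonlinearity satisfies $\tilde u^{p-1}R\le M^{p-1}g(d)$ near $\partial\Omega$, so the construction of a subsolution reduces formally to the $p=1$ case treated by Chou--Wang \cite{CW95}.

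To build the subsolution I would set, exactly as in Lemma \ref{lexi},
$$\rho(d)=-\int_0^d\!\left(\int_s^{r_0}g(t)\,dt\right)^{1/n}ds,$$
and consider $v(x)=\phi(x)+b\,\rho(d(x))$ with $d(x)=\dist(x,\partial\Omega)$ and $b>0$ to be fixed. Hypothesis (a) gives $\rho$ continuous on $[0,r_0]$ with $\rho(0)=0$, and $|\rho(r_0)|$ can be made arbitrarily small by shrinking $r_0$. Since $d$ is concave on the convex $\Omega$, $\rho'<0$, and $\rho''>0$, the identity $D^2(\rho\circ d)=\rho''(d)\,\nabla d\otimes\nabla d+\rho'(d)\,D^2 d\ge 0$ shows that $\rho(d)$ is convex, so $v$ is convex with $v=\phi$ on $\partial\Omega$. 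The Hessian calculation of Lemma \ref{lexi} together with Minkowski's determinant inequality yields $\det D^2 v\ge b^n C(n,\Omega)\,g(d)$ in $\Omega\setminus\Omega_{r_0}$; choosing $b^n\ge M^{p-1}/C(n,\Omega)$ then gives $\det D^2 v\ge v^{p-1}R(x)$ there. Picking $r_0$ small enough so that $b|\rho(r_0)|\le\tfrac12\inf_{\overline\Omega}\phi$ keeps $v>0$ throughout, and I would extend $v$ to $\Omega_{r_0}$ as a globally convex subsolution by the same patching used in the extension step of Lemma \ref{lexi}—freeze $\rho$ at $\rho(r_0)$ and then solve an auxiliary Monge--Amp\`ere problem on the uniformly convex $\Omega_{r_0}$ (or add a small strictly convex perturbation). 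The restriction $p<n+1$ enters exactly here: it is what makes the rescaling $v\mapsto\lambda v$ convert a subsolution of $\det D^2 v\ge c\,v^{p-1}R$ into one of $\det D^2 v\ge v^{p-1}R$ for $\lambda$ large.

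With the pair $(v,w)$ in hand, the Perron step proceeds as at the end of Lemma \ref{lexi}: let $\Phi$ denote the family of convex Alexandrov subsolutions $\tilde u$ of \eqref{PDEu1} with $\limsup_{x\to x_0}\tilde u(x)\le\phi(x_0)$ for every $x_0\in\partial\Omega$, and set $u(x)=\sup\{\tilde u(x):\tilde u\in\Phi\}$. Since $s\mapsto s^{p-1}$ is nondecreasing on $(0,\infty)$ for $p\ge 1$, the comparison principle of \cite{CY,GT} applies: every $\tilde u\in\Phi$ lies below $w$, so $u\le M$ is well defined, and standard balayage on small balls shows $u$ is a generalised solution of \eqref{PDEu1}. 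Boundary continuity $u=\phi$ on $\partial\Omega$ is automatic: $u\ge v$ gives $\liminf u\ge\phi$, while each $\tilde u\in\Phi$ satisfies $\limsup\tilde u\le\phi$ at $\partial\Omega$; and uniqueness follows from the same comparison principle. The main obstacle I anticipate is the extension of the near-boundary barrier to a globally defined convex subsolution—the patching across $\partial\Omega_{r_0}$ must preserve both convexity and the subsolution inequality on the interior piece where $R$ is bounded but no longer controlled by $g(d)$—which is the exact analogue of the extension step in Lemma \ref{lexi} and depends jointly on the uniform convexity of $\Omega$, hypothesis (a) on $g$, and the range $p<n+1$.
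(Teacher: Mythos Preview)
Your overall scheme---near-boundary barrier built from $\rho$, interior extension, then Perron---matches the paper's, and your near-boundary computation and the Perron conclusion are fine. The gap is in the interior extension and in your account of where the restriction $p<n+1$ enters.

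The rescaling $v\mapsto\lambda v$ you invoke does not preserve the boundary condition $v=\phi$ on $\partial\Omega$; it sends the boundary data to $\lambda\phi$, so it cannot be used to upgrade a subsolution of $\det D^2v\ge c\,v^{p-1}R$ to one of $\det D^2v\ge v^{p-1}R$ with the same Dirichlet data. Meanwhile your near-boundary step uses only $0<v\le M$ together with $p\ge1$, so as written the argument never actually exploits $p<n+1$. Insisting on $v>0$ throughout is precisely what blocks the extension: on $\Omega_{r_0}$ one would need $\det D^2 v\ge(\tfrac12\inf_{\overline\Omega}\phi)^{p-1}\inf_{\Omega_{r_0}}R>0$, a \emph{fixed} positive lower bound on a domain of \emph{fixed} size, and neither a ``small strictly convex perturbation'' nor shrinking $r_0$ reconciles this with $v>0$.

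The paper separates the two ingredients. It first builds $v=\rho(d)$ alone (without $\phi$), extends $v$ into $\Omega_{r_0}$ with $\det D^2v=\varepsilon$ small so that $\sup|v|\le G_0+C|\Omega|^{2/n}$ stays uniformly bounded, and only then sets $w=\phi+Av$. Now $\det D^2w\ge A^n\det D^2v$, while $\sup|w|\le|\phi|_0+A(G_0+C|\Omega|^{2/n})$ grows linearly in $A$. Both the near-boundary inequality $A^nC_1g(d)\ge(\sup|w|)^{p-1}g(d)$ and the interior inequality $A^n\varepsilon\ge(\sup|w|)^{p-1}\bar R$ reduce to $A^n$ dominating $A^{p-1}$, and \emph{this} is where $p<n+1$ is used: it allows $A$ to be taken large. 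The resulting $w$ is permitted to be negative; the Perron step still goes through because for $p\ge1$ the right-hand side $s\mapsto s^{p-1}R$ (extended by $0$ for $s\le0$) is nondecreasing.
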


\begin{proof}
Similarly as in \cite{CW95}, for $x=x_b+dn(x_b)$ in $\Omega\setminus\Omega_{r_0}$ we define
	\begin{equation}\label{funv}
		v(x)=\rho(d)= - \int_0^d\left(\int_s^{r_0}g(t)dt\right)^{1/n}ds,
	\end{equation}
and have
	\begin{equation}
		\det\,D^2v(x) = \prod_{i=1}^{n-1}\frac{k_i(x_b)}{1-k_i(x_b)d}(-\rho'(d))^{n-1}\rho''(d)
	\end{equation}
in $\Omega\setminus\Omega_{r_0}$, where $k_i(x_b)$, $i=1,\cdots,n-1$, are the principal curvatures of $\partial\Omega$ at $x_b$.

Next, we extend $v$ inside $\Omega_{r_0}$. Note that $v=-G_0$ is a constant on $\partial\Omega_{r_0/2}$. We extend $v$ to $\Omega_{r_0/2}$ so that $\det\,D^2v=\varepsilon>0$ in $\Omega_{r_0/2}$. 
For $\varepsilon$ small, $v$ is uniformly convex in $\Omega$. 
By the uniform estimate, we have
	\begin{equation}
		\sup |v| \leq G_0 + C|\Omega|^{2/n},
	\end{equation}
for some constant $C$ depending only on $n, \varepsilon$.

Let 
	\begin{equation*}
		w(x) = \phi(x) + A v(x), \quad x\in\Omega.
	\end{equation*}
Then $w$ is convex in $\Omega$, $w=\phi$ on $\partial\Omega$, and 
	\begin{equation}
	\begin{split}
		\sup|w| &\leq |\phi|_0 + A|v| \\
			&\leq |\phi|_0 + A(G_0 +C|\Omega|^{2/n}).
	\end{split}
	\end{equation}

By computation we have the left hand side of equation \eqref{PDEu1}
	\begin{equation}\label{detu}
		\det\,D^2w \geq A^n\det\,D^2v \geq \left\{\begin{array}{ll}
		A^n\varepsilon & \mbox{in }\Omega_{r_0} \\
		A^nC_1g &\mbox{in }\Omega\setminus\Omega_{r_0},
		\end{array}
		\right.
	\end{equation}	
where $C_1$ is a constant depending on $n, r_0$ and $\partial\Omega$.
Meanwhile, the right hand side 
	\begin{equation}
		w^{p-1}R(x) \leq \sup|w|^{p-1} R(x) \leq \left\{\begin{array}{ll}
		\sup|w|^{p-1}\bar R & \mbox{for }x\in\Omega_{r_0} \\
		\sup|w|^{p-1} R(x) &\mbox{for }x\in\Omega\setminus\Omega_{r_0},
		\end{array}
		\right.
	\end{equation}
where $\bar R=\sup_{x\in\Omega_{r_0}}R(x)$ is finite.
	
Since $p<n+1$, we can choose $A$ sufficiently large such that $\det\,D^2w\geq w^{p-1}R(x)$ in $\Omega$. 
This means that $w$ is a subsolution of \eqref{PDEu1} and $w=\phi$ on $\partial\Omega$. 
Last step is to use the Perron method, which requires $p\geq1$. 
Denote $\Phi$ by the set of all subsolutions of \eqref{PDEu1} and \eqref{by Diri}, and let $u(x)=\sup\{\tilde u(x) : \tilde u\in\Phi\}$.
One can easily verify that $u$ is a generalised solution of \eqref{PDEu1}. Since $w\in\Phi$ we conclude $u=\phi$ on $\partial\Omega$. 	
\end{proof}

\begin{lemma}
Suppose that there exists a positive function $h$ in $(0,r_0]$ satisfying 
	\begin{equation}
		\int_0^{r_0}h(t)dt=\infty,
	\end{equation}
such that 
	\begin{equation*}
		R(x_b,d)\geq h(d).
	\end{equation*}
Then the solution $u$ produced by the above lemma satisfies \eqref{by complete}.
\end{lemma}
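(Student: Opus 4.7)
The plan is to mimic the supersolution/comparison strategy used for the analogous $|Du|\to\infty$ statement in the $p<1$ case. I will construct a global convex supersolution $\tilde w$ of \eqref{PDEu1} satisfying $\tilde w=\phi$ on $\partial\Omega$ and dropping from $\phi$ into the interior at an infinite rate; the Monge-Amp\`ere comparison principle will then squeeze $u$ between $\tilde w$ from above and its own boundary data, forcing $u$ to acquire the same inward-normal blow-up.

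The local ingredient is a barrier of the form
\[
w(x)=\phi(x)+a\rho(d),\qquad \rho(d):=-\int_0^d\Bigl(\int_s^{r_0}h(t)\,dt\Bigr)^{1/n}ds,
\]
in the strip $\Omega\setminus\Omega_{r_0}$, with $a>0$ a small parameter. (If $(\int_s^{r_0}h)^{1/n}$ fails to be integrable at $s=0$, I would first replace $h$ by $\min(h,c/t)$, which keeps $\int_0^{r_0}h=\infty$ and $R\geq h$ while making $\rho$ finite.) The divergence $\int_0^{r_0}h=\infty$ forces $\rho'(0^+)=-\infty$, so $|Dw|\to\infty$ at $\partial\Omega$; convexity of $\rho$ in $d$ together with convexity of $\phi$ makes $w$ convex; and the radial-function identity $(-\rho')^{n-1}\rho''=h(d)/n$, inserted in the formula from \cite[Lemma 1]{CW95}, gives $\det D^2 w\leq C_1 a^n h(d)$ in a sufficiently thin strip.

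Because $\phi\geq c_1>0$ and $|a\rho|$ is small, $w\geq c_0>0$ in the strip, so the hypothesis $R\geq h$ delivers $R(x)w^{p-1}\geq c_0^{p-1}h(d)$; choosing $a$ with $C_1 a^n\leq c_0^{p-1}$ turns $w$ into a supersolution of \eqref{PDEu1} in the strip. Next, adapting the interior-extension device from Lemma \ref{ex p<n+1}, I would modify $w$ inside $\Omega_{r_0}$ (where $R$ is bounded above) to produce a uniformly convex $\tilde w\in C^2(\Omega)\cap C(\overline{\Omega})$ that agrees with $w$ near $\partial\Omega$ and satisfies $\det D^2\tilde w\leq R(x)\tilde w^{p-1}$ throughout $\Omega$, with $\tilde w=\phi$ on $\partial\Omega$. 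Since $\det D^2 u-R(x)u^{p-1}$ is nondecreasing in $u$ when $p>1$, the standard comparison principle \cite{CY,GT} yields $u\leq\tilde w$ in $\Omega$.

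Given this pointwise bound the conclusion is almost automatic from convexity: for $x_b\in\partial\Omega$ and small $t>0$,
\[
u(x_b+tn(x_b))-u(x_b)\leq\tilde w(x_b+tn(x_b))-\tilde w(x_b)=a\rho(t)+O(t),
\]
and the secant slope $(u(x_b+tn)-u(x_b))/t$, which for the convex function $t\mapsto u(x_b+tn)$ decreases to its right derivative as $t\to 0^+$, is dominated by $a\rho(t)/t+O(1)\to-\infty$ by L'H\^opital; hence $\partial u/\partial n(x_b)=-\infty$, and monotonicity of $\partial u/\partial n$ along the normal (together with the uniformity of the barrier in $x_b$) propagates this to $|Du(x)|\to\infty$ as $x\to\partial\Omega$. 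I expect the main obstacle to be the interior modification step producing $\tilde w$: one must glue the boundary-strip barrier to a globally $C^2$, uniformly convex supersolution that retains the boundary trace and the supersolution inequality; a secondary technical point is the freedom to replace $h$ by a slower majorant of $R$ so that $\rho$ is well defined.
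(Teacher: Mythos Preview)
Your strategy --- build a convex supersolution with infinite inward-normal slope and invoke the comparison principle --- is exactly the paper's. The difference is in how the barrier is assembled, and the paper's choice neatly dissolves both obstacles you anticipate.

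Rather than a single global barrier $\phi+a\rho$, the paper works pointwise: fixing $x_0\in\partial\Omega$ (rotated so $x_0=0$ and $e_n$ is the inner normal), it sets
\[
\hat v(x)=\eta\,\tilde v(x)+\phi(0)+x\cdot D\phi(0)+Kx_n,
\]
where $\tilde v$ is a global $C^2$ uniformly convex extension of $\rho(d)$. Because the correction is \emph{affine}, one has $\det D^2\hat v=\eta^n\det D^2\tilde v$ exactly --- no estimation of cross-terms from $D^2\phi$ is needed --- and the interior extension is performed on $\rho(d)$ alone, which is constant on each $\partial\Omega_r$ and hence falls under the easy case. One takes $K$ large so that $\hat v\geq\phi$ on all of $\partial\Omega$ with equality at $x_0$, then $\eta$ small so that $\hat v\geq v_0>0$ in $\Omega$ and $\det D^2\hat v\leq R\,\hat v^{p-1}$ both near and away from $\partial\Omega$. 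Comparison yields $u\leq\hat v$, and since they agree at $x_0$ the normal derivative of $u$ must blow up there.

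Your version trades the affine tangent for the full $\phi$, gaining a global boundary match $\tilde w=\phi$ but losing control of $\det D^2\tilde w$ in the interior: once $D^2\phi$ is present you cannot drive $\det D^2\tilde w$ to zero by shrinking $a$, and the ``interior-extension device'' you cite genuinely relies on the extended function being constant on $\partial\Omega_r$. (A small slip: for the supersolution inequality in $\Omega_{r_0}$ you need $R$ bounded \emph{below}, not above.) The pointwise affine trick is precisely the simplification that removes the gluing obstacle you flag.
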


\begin{proof}
For this proof we need an upper barrier function. Recall that $\phi>0$ on $\partial\Omega$.
Introduce the function $v$ as before, where $g$ in \eqref{funv} is now replaced by $h$, namely
	\begin{equation*}
		v(x)=\rho(d)= - \int_0^d\left(\int_s^{r_0}h(t)dt\right)^{1/n}ds,\quad x\in\Omega\setminus\Omega_{r_0}.
	\end{equation*}
Then $|Dv(x)|\to\infty$ as $x\to\partial\Omega$.
Extend $v$ to $\Omega_{r_0}$ as in the previous proof and then modify $v$ to get a uniformly convex function $\tilde v\in C^2(\Omega)$ so that $\tilde v= v$ in $\Omega\setminus\Omega_{r_0/2}$. 

For any point $x_0\in\partial\Omega$ we shall assume $x_0=0$ and the positive $x_n$-axis is in the inner normal direction.
Define
	\begin{equation}
		\hat v(x):=\eta\tilde v(x)+\phi(0)+ x\cdot D\phi(0) + Kx_n,\quad x\in\Omega,
	\end{equation}
where $K>0$ is a constant. 
As $\phi\in C^2(\overline\Omega)$ and $\partial\Omega\in C^2$ is uniformly convex, we can choose $K$ large enough such that $\hat v\geq\phi$ on $\partial\Omega$ and $\hat v(x_0)=\phi(x_0)$. 
Then by choosing $\eta>0$ small enough, we also have $\hat v\geq v_0>0$ in $\Omega$.

Using similar computations as before, we have
	\begin{equation*}
		\det\,D^2\hat v \leq \eta^n\det\,D^2\tilde v \leq \left\{\begin{array}{ll}
		\eta^n\varepsilon & \mbox{in }\Omega_{r_0} \\
		\eta^nC_1h &\mbox{in }\Omega\setminus\Omega_{r_0},
		\end{array}
		\right.
	\end{equation*}
where $C_1$ is a constant depending on $n, r_0$ and $\partial\Omega$.
For the right hand side we have
	\begin{equation*}
		\hat v^{p-1}R(x) \geq v_0^{p-1}R(x) \geq \left\{\begin{array}{ll}
		v_0^{p-1}\underline{R} & \mbox{for }x\in\Omega_{r_0} \\
		v_0^{p-1} R(x) &\mbox{for }x\in\Omega\setminus\Omega_{r_0},
		\end{array}
		\right.
	\end{equation*}
where $\underline{R}=\inf_{x\in\Omega_{r_0}}R(x)$ is positive and finite. 

Therefore, by choosing $K$ sufficiently large and $\eta$ sufficiently small, using the comparison principle \cite{GT} we obtain $\hat v\geq u$ in $\Omega$.  Hence $|Du(x)|\to\infty$ as $x\to x_0$. 
\end{proof}
%


\subsubsection{The case of $p>n+1$}

To obtain existence, we adopt a different approach of constructing subsolutions. 
Let's define
	\begin{equation}
		\rho(d)= - \int_0^d\left(\int_s^{r_0}g(t)dt\right)^{1/n}ds.
	\end{equation}
Assume $\phi=\phi_0>0$ is a constant on $\partial\Omega$. Define
	\begin{equation}\label{defv}
		v(x) = (-A\rho(d)+\phi_0^{-\frac{1}{\delta}})^{-\delta},\quad\mbox{ in }\Omega\setminus\Omega_{r_0},
	\end{equation}
where $\delta>0$, $A>0$ are constants to be determined.

Setting $\delta=\frac{n}{p-n-1}$, by computation we have
	\begin{equation}
	\begin{split}
		\det\,D^2 v &\geq  A^n\delta^n v^{\frac{n(\delta+1)}{\delta}}\prod_{i=1}^{n-1}\frac{\kappa_i}{1-\kappa_id}(-\rho')^{n-1}\rho'' \\
			&\geq A^n\delta^n C(n,\Omega) g(d) v^{p-1},	
	\end{split}
	\end{equation}
where $C$ is a constant depending only on $n$ and $\Omega$.
Choosing $A$ sufficiently large and by extending $v$ inside $\Omega_{r_0}$ as before, we then obtain a subsolution. 
Note that $0<v\leq \phi_0$ in $\Omega$ and $v=\phi_0$ on $\partial\Omega$. 
The existence of solution $u$ thus follows by the Perron process. 

For a general $\phi>0$ on $\partial\Omega$, we need modify $v$ in \eqref{defv}.
For a point $x_0\in\partial\Omega$, we may assume $x_0=0$ and the positive $x_n$-axis is in the inner normal direction. 
Let $\phi_0=\phi(0)$.
Define
	\begin{equation}\label{defv1}
		v(x) = (-A\rho(d)+\phi_0^{-\frac{1}{\delta}}+Kx_n-\frac{1}{\delta}\phi_0^{-\frac{1}{\delta}-1}x\cdot D\phi(0))^{-\delta},\quad\mbox{ in }\Omega\setminus\Omega_{r_0},
	\end{equation}
where $K>0$ is chosen sufficiently large such that $v\leq\phi$ on $\partial\Omega$ and $v=\phi$ at $x_0$. 
By choosing $\delta=\frac{n}{p-n-1}$ and $A$ sufficiently large as above, we have $v$ is a solution. Therefore, the existence of solution $u$ follows. 

For completeness, 
Lemma 5.2 applies in this case, so we have $|Du(x)|\to\infty$ as $x\to\partial\Omega$, and obtain the completeness.


\subsection{Type II}

Next, we consider type II hypersurfaces. In this case, we investigate the entire solution of \eqref{PDEu p>1}, i.e.
	\begin{equation}\label{PDE II}
		\det\,D^2u = (1+|x|^2)^{-\frac{n+p+1}{2}} f\left(\frac{x,-1}{\sqrt{1+|x|^2}}\right)\,u^{p-1} \quad \mbox{ in }\Omega=\mathbb{R}^n.
	\end{equation}
When $p>n+1$, we prove the existence of a solution by constructing suitable upper and lower barriers.

Assume that $f$ satisfies the asymptotic growth condition
	\begin{equation}\label{f asym}
		f(x) \sim (1+|x|^2)^q \quad \mbox{ as } x\to\infty,
	\end{equation}
where $q\in(0,1)$ is a constant. 
Note that this is equivalent to $f(X)\sim |X_{n+1}|^{-2q}$.
We remark that it is necessary to have a growth condition on $f$. 
Otherwise, by integration one can see that when $\Omega^*$ is bounded, $f$ is bounded, there doesn't exist a complete noncompact hypersurface $M$ satisfying \eqref{PDE II}, (see \cite{U2} for the case of $f\equiv1$).
To prove this claim, it is convenient to use Equation \eqref{PDEu*} for the dual function $u^*$. In that case, $M$ is a graph of $u^*$ over $\Omega^*$.
If $\Omega^*$ is bounded, let $y_0\in\partial\Omega^*$. Since $u^*$ is convex, there exists a constant $C_0>0$ such that
	\begin{equation*}
		u^* \geq -C_0 \quad\mbox{ on }\Omega^*\cap B_1(y_0).
	\end{equation*}
Let $P\in \tilde M := M\cap\left(B_1(y_0)\times\mathbb{R}\right)$.
Assuming $M$ is a complete, noncompact hypersurface, we compute its support function $H$ at $P$ and have
	\begin{equation*}
		H|_P = \frac{y\cdot Du^* - u^*}{\sqrt{1+|Du^*|^2}} \leq |y|+C_0 \leq 1+|y_0|+C_0.
	\end{equation*}
Consequently, $H^{-1} \geq c_0>0$ in a neighbourhood $G\subset\tilde M$. 
Similarly to the nonexistence Lemmas \ref{nonexistence} and \ref{non2}, by integrating \eqref{p>1} we obtain
	\begin{equation*}
		c_0^{p-1}\mathcal{H}^n(G) \leq \int_D fK d\mu= \int_D f dx,
	\end{equation*}
where $D\subset\mathbb{S}^n_-$ is the spherical image of $G$, $d\mu = K^{-1}dx$ is the area measure, and $dx$ is the spherical measure.
As $\mathcal{H}^n(G)=\infty$, so the function $f$ cannot be bounded. 

From \eqref{PDE II} and \eqref{f asym}, an upper (or lower) barrier is a function satisfying 
	\begin{equation*}
		\det\,D^2u \leq (1+|x|^2)^{-\gamma}u^{p-1} \qquad (\mbox{or } \geq)
	\end{equation*}
as $x\to\infty$, where $\gamma:=\frac{n+p+1}{2}-q$. 
In a bounded domain, one can always construct such a barrier by rescaling $u$ to $\lambda u$ for a suitable constant $\lambda$, provided $p\neq n+1$. 
	
Now, let's consider the function 
	\begin{equation*}
		w(x) = (1+|x|^2)^\delta
	\end{equation*}
where $\delta>1/2$ is to be chosen. Clearly $w$ is a convex function. 

By computations
	\begin{equation*}
		D_{ij}w = 2\delta(1+|x|^2)^{\delta-1}\delta_{ij} + 4\delta(\delta-1)(1+|x|^2)^{\delta-2}x_ix_j,
	\end{equation*}
where $\delta_{ij}$ is the Kronecker delta. Hence,
	\begin{equation*}
	\begin{split}
		\det\,D^2w &= (2\delta)^n(1+|x|^2)^{n(\delta-1)}\left(\frac{1+(2\delta-1)|x|^2}{1+|x|^2}\right) \\
				&= C(n,\delta) w^{\frac{n(\delta-1)+\gamma}{\delta}} (1+|x|^2)^{-\gamma},
	\end{split}
	\end{equation*}
where $C(n,\delta)$ is a positive constant bounded by $C_1 \leq C(n,\delta) \leq C_2$, and
	\begin{eqnarray*}
		&& C_1:=(2\delta)^n\inf_{x\in\mathbb{R}^n}\left\{\frac{1+(2\delta-1)|x|^2}{1+|x|^2}\right\}, \\
		&& C_2:=(2\delta)^n\sup_{x\in\mathbb{R}^n}\left\{\frac{1+(2\delta-1)|x|^2}{1+|x|^2}\right\}.
	\end{eqnarray*}
Choose $\delta=(\gamma-n)/(p-n-1)$ such that ${\frac{n(\delta-1)+\gamma}{\delta}}=p-1$. One can see that as far as $q<1$,
	\begin{equation*}
		\delta > \frac{\frac{n+p+1}{2}-n-1}{p-n-1} = \frac{1}{2}.
	\end{equation*}
By a rescaling we obtain that $\overline{w}=\overline\mu w$ is a convex supersolution of \eqref{PDE II} for $\overline\mu^{p-n-1}\geq C_1$, while $\underline{w}=\underline\mu w$ is a convex subsolution of \eqref{PDE II} for $0<\underline\mu^{p-n-1}\leq C_2$.
Since $\underline\mu\leq\overline\mu$, we have $\underline w\leq\overline w$.
Let $\phi$ be any smooth function such that $\underline w\leq\phi\leq\overline w$ in $\mathbb{R}^n$. 
By \cite{CNS1} the Dirichlet problem
	\begin{eqnarray*}
		\det\,D^2w_k \!\!&=&\!\! (1+|x|^2)^{-\frac{n+p+1}{2}} f\,w_k^{p-1} \quad\mbox{in }B_{2^k}(0), \\
			w_k	\!\!&=&\!\! \phi \quad\mbox{on }\partial B_{2^k}(0),
	\end{eqnarray*}
has a unique convex solution $w_k\in C^\infty(B_{2^k})$, $\underline w\leq w_k\leq\overline w$ in $B_{2^k}$. 
From this there exists a subsequence of $\{w_k\}$ converges locally in any $C^l$ form to a convex solution $u\in C^\infty(\mathbb{R}^n)$ of \eqref{PDE II}, and $\underline w\leq u \leq\overline w$ in $\mathbb{R}^n$.
Thus $u(x)/\sqrt{1+|x|^2}\to\infty$ as $|x|\to\infty$, $\Omega^*=Du(\Omega)=\mathbb{R}^n$, and hence the corresponding hypersurface $M$ is complete.

In fact, any admissible solution $u$ of \eqref{PDE II} with $Du(\Omega)=\mathbb{R}^n$ must satisfies
	\begin{equation}\label{u asym}
		u(x)/\sqrt{1+|x|^2}\to\infty \quad\mbox{ as }|x|\to\infty.
	\end{equation}
Otherwise, if this is not true, there exists a sequence $\{z_k\}\subset\Omega$ such that $|z_k|\to\infty$ and for each $k$, $u(z_k)\leq C|z_k|$ for some constant $C$. 
By choosing a subsequence and making a rotation of coordinates if necessary we may assume that $z_k/|z_k|\to e_n=(0,\cdots,0,1)$. 
Let
	\begin{equation*}
		x_{n+1} = a_0 + \langle a, x \rangle = a_0 + \sum_{i=1}^n a_ix_i
	\end{equation*}
be the graph of any tangent hyperplane to graph $u$. Then
	\begin{equation*}
		a_0 + \langle a, z_k \rangle \leq C|z_k| 
	\end{equation*}
for each $k$, so dividing by $|z_k|$ and letting $k\to\infty$ we obtain $a_n\leq C$. 
This implies that $Du(\Omega)\cap\{x_n>C\}=\emptyset$, which contradicts with $Du(\Omega)=\mathbb{R}^n$.
This proves \eqref{u asym}. 

Therefore, we have the following existence result, which is equivalent to Theorem \ref{thm II}.
\begin{theorem}
When $p>n+1$, $f$ satisfies the asymptotic growth condition \eqref{f asym}, there exists a complete noncompact hypersurface $M$ whose support function is a solution of \eqref{PDE II}.
\end{theorem}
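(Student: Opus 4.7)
The plan is to prove this by constructing explicit radial sub- and supersolutions to equation \eqref{PDE II}, solving approximating Dirichlet problems on expanding balls, and passing to the limit via interior estimates. The growth condition \eqref{f asym} is precisely what allows a scale-invariant ansatz to work on all of $\mathbb{R}^n$.

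First I would rewrite \eqref{PDE II} using \eqref{f asym} as
\begin{equation*}
\det D^2 u \lesssim (1+|x|^2)^{-\gamma} u^{p-1}, \qquad \gamma:=\tfrac{n+p+1}{2}-q,
\end{equation*}
and search for a barrier of the homogeneous form $w(x)=(1+|x|^2)^\delta$. A direct computation of $D^2w$ shows
\begin{equation*}
\det D^2w = C(n,\delta)\,w^{\frac{n(\delta-1)+\gamma}{\delta}}(1+|x|^2)^{-\gamma},
\end{equation*}
with $C(n,\delta)$ pinched between two positive constants $C_1,C_2$ depending only on $n$ and $\delta$. Choosing $\delta=(\gamma-n)/(p-n-1)$ balances the exponent to $p-1$, and the assumption $q<1$ gives exactly $\delta>1/2$, which will be crucial for completeness. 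Since $p>n+1$, rescaling $w$ by appropriate constants $\overline\mu^{p-n-1}\ge C_1$ and $0<\underline\mu^{p-n-1}\le C_2$ produces a smooth convex supersolution $\overline w=\overline\mu w$ and subsolution $\underline w=\underline\mu w$ with $\underline w\le\overline w$ throughout $\mathbb{R}^n$.

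Next I would fix any smooth function $\phi$ sandwiched between $\underline w$ and $\overline w$, and invoke the Caffarelli--Nirenberg--Spruck theory \cite{CNS1} to solve the Dirichlet problems
\begin{equation*}
\det D^2 w_k = (1+|x|^2)^{-\frac{n+p+1}{2}} f\,w_k^{p-1}\ \ \text{in } B_{2^k}(0), \qquad w_k=\phi\ \ \text{on } \partial B_{2^k}(0),
\end{equation*}
obtaining smooth convex solutions with $\underline w\le w_k\le\overline w$ by the comparison principle. The two-sided barrier gives uniform local bounds, so standard interior regularity for Monge-Amp\`ere equations yields a subsequence converging in $C^\infty_{\mathrm{loc}}$ to a global convex solution $u\in C^\infty(\mathbb{R}^n)$ of \eqref{PDE II} still trapped between $\underline w$ and $\overline w$.

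Finally I would deduce completeness. Because $\delta>1/2$, the lower barrier forces $u(x)/\sqrt{1+|x|^2}\to\infty$ as $|x|\to\infty$, and the preparatory argument in the type II discussion shows that any such solution must have $Du(\mathbb{R}^n)=\mathbb{R}^n$: if some direction of $Du$ were bounded, a tangent plane slope estimate along a sequence $z_k/|z_k|\to e$ would contradict $u/|x|\to\infty$. Since $\Omega^*=Du(\mathbb{R}^n)=\mathbb{R}^n$, the Legendre dual $u^*$ is defined on all of $\mathbb{R}^n$ and its graph is the desired complete, noncompact hypersurface $M$ with the prescribed $p$-curvature. The main technical subtlety I expect is producing a rescaling pair $(\underline\mu,\overline\mu)$ that is compatible with a single smooth boundary value $\phi$, and verifying the $C^\infty_{\mathrm{loc}}$ compactness of $\{w_k\}$ uniformly as $k\to\infty$; both ultimately rest on the ellipticity being non-degenerate on compact subsets of $\mathbb{R}^n$, which the two-sided barrier guarantees.
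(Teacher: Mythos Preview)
Your proposal is correct and follows essentially the same approach as the paper: the same radial barrier $w(x)=(1+|x|^2)^\delta$ with $\delta=(\gamma-n)/(p-n-1)$, the same rescaling to produce ordered sub- and supersolutions, the same expanding Dirichlet problems on $B_{2^k}$ solved via \cite{CNS1}, and the same completeness argument from $\delta>1/2$ forcing $u(x)/\sqrt{1+|x|^2}\to\infty$ and hence $Du(\mathbb{R}^n)=\mathbb{R}^n$. There is no substantive difference in strategy or detail.
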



\subsection{Type III}

This case can be handled similarly as in \cite{CW95}.
We observe that for a type III hypersurface, $\Omega$ is of the form $\omega\times\mathbb{R}^m$ for some $m<n$, where $\omega$ is a bounded convex domain in $\mathbb{R}^{n-m}$. 
Near $\partial\omega$ we may write $\tilde x=(x_1,\cdots,x_{n-m})=\tilde x_b+dn(\tilde x_b)$ analogously as before. 
Correspondingly the boundary conditions \eqref{by complete} and \eqref{by Diri} are imposed on $\partial\omega$
	\begin{eqnarray}
		&& |Du(x)| \to \infty \quad \mbox{as } x\to\partial\omega, \label{by complete 3} \\
		&& u(x) = \phi(x) \quad x\in\partial\omega, \label{by Diri 3}
	\end{eqnarray}
where $\phi$ is prescribed on $\partial\omega$. 
Then by following the lines in \S4.1, we have

\begin{theorem}\label{thm III}
Let $p\geq1$, $\neq n+1$, $\Omega=\omega\times\mathbb{R}^m$, where $\omega$ is a uniformly convex $C^2$-domain in $\mathbb{R}^{n-m}$.
Suppose that $\phi$ can be extended to $\Omega$ so that $D^2\phi(x)\geq\delta_0I$ for some positive constant $\delta_0$, where $I$ is the identity matrix. 
Suppose moreover there exist two positive functions $g$ and $h$ defined in $(0,r_0]$, $r_0>0$, satisfying 
	\begin{eqnarray}
		&& \int_0^{r_0}\left(\int_s^{r_0}g(t)dt\right)^{1/(n-m)}ds < \infty, \\
		&& \int_0^{r_0}h(t)dt=\infty,  
	\end{eqnarray}
such that
	\begin{equation}
		h(d) \leq R(\tilde x_b, d) \leq g(d),\quad\mbox{ where } \tilde x=\tilde x_b+dn(\tilde x_b),
	\end{equation}
near $\partial\Omega$. 
Then there exists a unique solution $u$ of \eqref{PDEu p>1}, \eqref{by complete 3} and \eqref{by Diri 3} in $C(\overline\Omega)\cap C^{2,\alpha}(\Omega)$.
\end{theorem}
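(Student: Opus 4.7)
The plan is to adapt the proof of Theorem \ref{thm I} to the product structure $\Omega = \omega\times\mathbb{R}^m$, using the hypothesis $D^2\phi \geq \delta_0 I$ to provide the transverse rigidity needed for the unbounded $\mathbb{R}^m$-direction. First I would exhaust $\Omega$ by bounded, smooth, uniformly convex subdomains $\Omega_k = \omega\times B_k$ (suitably rounded), solve a Dirichlet problem for \eqref{PDEu p>1} on each, and pass to the limit via interior estimates.

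For the lower barrier near $\partial\omega \times \mathbb{R}^m$, in analogy with \eqref{funv} set
\[
v(\tilde x) = -\int_0^d \left(\int_s^{r_0} g(t)\,dt\right)^{1/(n-m)} ds, \qquad d = \dist(\tilde x, \partial\omega),
\]
extending to a uniformly convex function on $\omega$ as in Lemma \ref{ex p<n+1}, and define $w(x) = \phi(x) + A v(\tilde x)$. Writing $D^2 w$ in block form with respect to the splitting $x = (\tilde x, y)$ and applying the Schur complement gives
\[
\det D^2 w \;=\; \det(D^2_y \phi)\cdot \det\bigl(S + A\, D^2_{\tilde x} v\bigr),
\]
where $S$ is the Schur complement of $D^2_y\phi$ in $D^2\phi$. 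The hypothesis $D^2\phi \geq \delta_0 I$ forces $D^2_y\phi \geq \delta_0 I_m$ and $S \geq c_0 I_{n-m}$, so Minkowski's determinant inequality yields $\det D^2 w \geq c\, A^{n-m} g(d)$ near $\partial\omega$. For $1\leq p<n+1$, taking $A$ large makes $w$ a subsolution of \eqref{PDEu p>1}, exactly as in Lemma \ref{ex p<n+1}; for $p>n+1$ one uses the power-type construction of \eqref{defv1} adapted to the reduced dimension $n-m$.

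For the upper barrier, the constructions of \S4.1 carry over: using $h$ in place of $g$, one builds $\tilde v(\tilde x)$ with $|D\tilde v|\to\infty$ on $\partial\omega$, and sets $\hat v = \phi + \eta\tilde v$ with a linear tilt at a fixed boundary point. Solving the Dirichlet problem on each $\Omega_k$ via \cite{CNS1, Guan98} yields classical solutions $u_k \in C^{2,\alpha}(\overline{\Omega_k})$ squeezed locally between the two barriers. Interior Pogorelov and Evans--Krylov estimates then supply uniform $C^{2,\alpha}_{\mathrm{loc}}$ bounds, and a diagonal subsequence converges to the desired $u \in C(\overline\Omega)\cap C^{2,\alpha}(\Omega)$; the upper barrier forces $|Du|\to\infty$ on $\partial\omega$, i.e.\ \eqref{by complete 3}, and uniqueness follows from the Monge--Amp\`ere comparison principle.

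The main obstacle is the unbounded $y$-direction. The natural barriers from \S4.1 are only functions of $\tilde x$ and hence degenerate transversally, so they fail as Monge--Amp\`ere barriers on their own. The hypothesis $D^2\phi \geq \delta_0 I$ is the precise substitute: adding $\phi$ to the $\tilde x$-barriers recovers strict convexity in all $n$ directions, and the Schur-complement estimate converts the $(n-m)$-dimensional bound on the barrier into a genuine $n$-dimensional Monge--Amp\`ere estimate. Making this bound uniform in $y$ and ensuring compatibility with the polynomial decay of $R(x)$ as $|y|\to\infty$ will be the technical heart of the argument.
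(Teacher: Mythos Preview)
Your plan is essentially the same as the paper's, which gives no detailed argument at all: it simply remarks that $\Omega=\omega\times\mathbb{R}^m$, that boundary conditions are imposed on $\partial\omega$, and that one then ``follows the lines in \S4.1'' as in \cite{CW95}. Your identification of the hypothesis $D^2\phi\ge\delta_0 I$ as the source of transverse convexity, and the Schur-complement reduction turning the $(n-m)$-dimensional barrier $v(\tilde x)$ into a genuine $n$-dimensional sub/supersolution after adding $\phi$, is precisely the mechanism Chou--Wang use for $p=1$ and is the intended adaptation here.
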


\bibliographystyle{amsplain}

\end{document}